\def\Xint#1{\mathchoice
{\XXint\displaystyle\textstyle{#1}}%
{\XXint\textstyle\scriptstyle{#1}}%
{\XXint\scriptstyle\scriptscriptstyle{#1}}%
{\XXint\scriptscriptstyle\scriptscriptstyle{#1}}%
\!\int}
\def\XXint#1#2#3{{\setbox0=\hbox{$#1{#2#3}{\int}$}
\vcenter{\hbox{$#2#3$}}\kern-.5\wd0}}
\def\dashint{\Xint-}
\numberwithin{equation}{section}
\newtheorem{Theorem}{Theorem}[section]
\newtheorem{Corollary}[Theorem]{Corollary}
\newtheorem{Lemma}[Theorem]{Lemma}
 { \theoremstyle{definition}
\newtheorem{Definition}[Theorem]{Definition}
\newtheorem{Remark}[Theorem]{Remark} }
\begin{document}

\allowdisplaybreaks

\newcommand{\arXivNumber}{1705.10544}

\renewcommand{\PaperNumber}{006}

\FirstPageHeading

\ShortArticleName{On the TASEP with Second Class Particles}

\ArticleName{On the TASEP with Second Class Particles}

\Author{Eunghyun LEE}

\AuthorNameForHeading{E.~Lee}

\Address{Department of Mathematics, Nazarbayev University, Kazakhstan}
\Email{\href{mailto:eunghyun.lee@nu.edu.kz}{eunghyun.lee@nu.edu.kz}}
\URLaddress{\url{https://sites.google.com/a/nu.edu.kz/eunghyun-lee-s-homepage/}}

\ArticleDates{Received August 08, 2017, in f\/inal form January 08, 2018; Published online January 12, 2018}

\Abstract{In this paper we study some conditional probabilities for the totally asymmetric simple exclusion processes (TASEP) with second class particles. To be more specif\/ic, we consider a f\/inite system with one f\/irst class particle and $N-1$ second class particles, and we assume that the f\/irst class particle is initially at the leftmost position. In this case, we f\/ind the probability that the f\/irst class particle is at $x$ and it is still the leftmost particle at time $t$. In particular, we show that this probability is expressed by the determinant of an $N\times N$ matrix of contour integrals if the initial positions of particles satisfy the \textit{step initial condition}. The resulting formula is very similar to a known formula in the (\textit{usual}) TASEP with the \textit{step initial condition} which was used for asymptotics by Nagao and Sasamoto [\textit{Nuclear Phys.~B} \textbf{699} (2004), 487--502].}

\Keywords{TASEP; Bethe ansatz; second class particles}

\Classification{60J25; 60K35; 82B23}

\section{Introduction}
In this paper we study some conditional probabilities in the totally asymmetric simple exclusion processes (TASEP) with second class particles. So far, there have been many works on the coordinate Bethe Ansatz applicable stochastic particle models, but most of them are on the models of a single species such as the asymmetric simple exclusion processes (ASEP) \cite{Schutz-1997,Tracy-Widom-2008}, the $q$-totally asymmetric simple exclusion processes ($q$-TASEP) \cite{Borodin-Corwin-Sasamoto-2014}, the $q$-Hahn asymmetric exclusion processes \cite{Barraquand-Corwin-2016,Lee-2012} and the $q$-totally asymmetric zero range processes ($q$-TAZRP) \cite{Korhonen-Lee-2014,Lee-Wang-2017-arX,Povolotsky-2013,Wang-Waugh-2016}. Our interests in these models include but are not limited to the transition probabilities, the probability distribution of a tagged particle's position for some special initial conditions, and their asymptotics that may conf\/irm that the models belong to the KPZ universality class. However, in this direction of studies, the TASEP with second class particles have not been explored much, and only a limited number of papers were published \cite{Chatterjee-Schutz-2010,Tracy-Widom-2009,Tracy-Widom-2013} because of the complexity in computations. The transition probabilities of the TASEP with second class particles were studied by Chatterjee and Sch\"{u}tz \cite{Chatterjee-Schutz-2010} and also studied in more general setting (the ASEP with multiple species particles) by Tracy and Widom \cite{Tracy-Widom-2013}. A~motivation of this paper starts from naive questions that \textit{``Can we extend the results in the TASEP $($such as the one point distributions for some initial conditions and their asymptotics$)$ to the TASEP with second class particles?'' ``Furthermore, what observables are meaningful and should be considered in connection to the KPZ universality class?''} In this paper, we do not provide complete answers to these questions but would like to study some explicit and exact formulas that can serve as a~starting point for further studies. The integrability of the TASEP with second class particles has been conf\/irmed in \cite{Chatterjee-Schutz-2010,Tracy-Widom-2013}, hence as the next step, we are interested in f\/inding a \textit{neat} probability formula of a certain event from which we hope to obtain a meaningful asymptotic result. We provide a~probability formula (\ref{530-pm-517}) whose form is very similar to a known formula in the TASEP without second class particles. But, it is not clear at this moment whether this high similarity in the appearances will imply easy asymptotics or not. Although we consider the TASEP with second class particles for algebraic simplicity, it seems that the techniques used in this paper can be also used to extend the results in this paper to the ASEP with second class particles. The author conf\/irmed this extension for some \textit{small} systems of the ASEP with second class particles.

 The def\/inition of the TASEP with second class particles is as follows. Each site on $\mathbb{Z}$ can be occupied by at most one particle and each particle belongs to one of two dif\/ferent species, labeled~$1$ or~$2$. A particle of species $i$ at $x\in \mathbb{Z}$ tries to jump to $x+1$ after a waiting time exponentially distributed with rate~1. If the target site $x+1$ is empty, the particle jumps to~\smash{$x+1$}. But, if $x+1$ is not empty, one of the following cases occurs: (i) if $x+1$ is occupied by another particle of the same species $i$, the particle of species~$i$ at $x$ cannot jump to $x+1$, and the waiting time is reset, (ii) if $x$ is occupied by a particle of species~2 and $x+1$ is occupied by a particle of species~1, then the particle of species~2 at $x$ can jump to $x+1$ by interchanging the positions with the particle of species~1 at~$x+1$, (iii) if~$x$ is occupied by a particle of species~1 and~\smash{$x+1$} is occupied by a particle of species~2, then the particle of species~1 cannot jump to~\smash{$x+1$} and the waiting time is reset (see Fig.~\ref{Fig1}).
 \begin{figure}[h]\centering
\begin{tikzpicture}[scale=0.8]
\draw (-0.5,-3)--(1.5,-3)
 (-0.5,-1.5)--(1.5,-1.5)
 (3,-1.5)--(5,-1.5)
 (-0.5,0)--(1.5,0)
 (3,0)--(5,0);
\draw [fill] (0,-3) circle [radius=0.1];
\draw [fill] (1,-3) circle [radius=0.1];
\draw [fill] (0,-1.5) circle [radius=0.1];
\draw [fill] (1,-1.5) circle [radius=0.1];
\draw [fill] (3.5,-1.5) circle [radius=0.1];
\draw [fill] (4.5,-1.5) circle [radius=0.1];
\draw [fill] (0,0) circle [radius=0.1];
\draw [fill] (1,0) circle [radius=0.1];
\draw [fill] (3.5,0) circle [radius=0.1];
\draw [fill] (4.5,0) circle [radius=0.1];
\node[below] at (0,-3.1) {$1$};
\node[below] at (1,-3.1) {$2$};
\node[below] at (0,-1.6) {$2$};
\node[below] at (1,-1.6) {$1$};
\node[below] at (3.5,-1.6) {$1$};
\node[below] at (4.5,-1.6) {$2$};
\node[below] at (0,-0.1) {$1$};
\node[below] at (1,-0.1) {$1$};
\node[below] at (3.5,-0.1) {$2$};
\node[below] at (4.5,-0.1) {$2$};
\node at (2.25,-1.5) {$\Longrightarrow$};
\draw[->](0,-2.8) arc [radius=0.55, start angle=150, end angle= 30];
\draw[->](0,-1.3) arc [radius=0.55, start angle=150, end angle= 30];
\draw[->](0,0.2) arc [radius=0.55, start angle=150, end angle= 30];
\draw[->](3.5,0.2) arc [radius=0.55, start angle=150, end angle= 30];
\node at (0.5,-2.53) {$\times$};
\node at (0.5,0.47) {$\times$};
\node at (4,0.47) {$\times$};
\node[align=left] at (8,0) {(i)};
\node[align=left] at (8,-1.5) {(ii)};
\node[align=left] at (8,-3) {(iii)};
\end{tikzpicture}
\caption{}\label{Fig1}
\end{figure}

In other words, the particles of species 2 have a priority over those of species 1. The particles of species 1 are called second class particles and the particles of species~2 are called f\/irst class particles.
\subsection{Statement of the main results}In this paper we consider a f\/inite system with~$N$ particles. The state space of the process is a~countable set of pairs $(X,\pi)$ where $X=(x_1,\dots,x_N) \in \mathbb{W}^N$ with
\begin{gather*}
\mathbb{W}^N = \big\{(x_1,\dots, x_N)\colon (x_1,\dots, x_N) \in \mathbb{Z}^N ~\textrm{and}~ x_1 < \cdots < x_N\big\}
\end{gather*}
and $\pi=(\pi_1\cdots\pi_N)$ is a f\/inite sequence of length $N$ whose elements are 1 or 2. A state
\begin{gather*}
\big((x_1,\dots, x_N),(\pi_1\cdots\pi_N)\big)
\end{gather*}
implies that the $i^{\textrm{th}}$ particle from the left is at $x_i$ and this particle belongs to species $\pi_i$. We will sometimes omit the parentheses in the expressions of $(x_1,\dots,x_N)$ and $(\pi_1\cdots\pi_N)$. If we would like to express a state at time $t$, we write
\begin{gather*}
\big((x_1,\dots, x_N),(\pi_1\cdots\pi_N);t\big)
\end{gather*}
 We denote by $P_{(Y,\nu)}(X,\pi;t)$ a transition probability, that is, the probability that the system is at state $(X,\pi)$ at time $t$, given that the initial state is $(Y,\nu)$. We denote by $\mathbb{P}_{(Y,\nu)}$ the probability measure of the process with initial state $(Y,\nu)$. Let $E_t=\{(X,21\dots 1;t)\colon x_1 = x\}$, the event that the leftmost particle is the f\/irst class particle and it is at $x$ at time $t$.
 \begin{Remark} One of the motivations of this paper is the fact that if $\pi=\nu$, then the transition probabilities $P_{(Y,\nu)}\big((x_1,\dots,x_N),\nu;t\big)$ can be expressed as a determinant of an $N\times N$ matrix \cite{Chatterjee-Schutz-2010}. In the TASEP with $N$ particles of the same kind, the transition probabilities are also expressed as a determinant \cite{Schutz-1997} and the distribution of the leftmost particle's position is obtained by summing the determinants over all possible $x_2,\dots,x_N$ with $x_1<x_2<\cdots<x_N$ for f\/ixed~$x_1$. This multiple sum of the determinants can be expressed as a closed form of a multiple contour integral when the initial positions of particles are given by $(1,2,\dots, N)$ \cite{Nagao-Sasamoto-2004,Tracy-Widom-2008}. This was possible due to some special properties of the determinants \cite[Theorem~4]{Nagao-Sasamoto-2004} or equivalently due to the algebraic identity which was found in more general setting in \cite[equation~(1.6)]{Tracy-Widom-2008}. Hence, it is natural to ask if there is a similar algebraic structure originated from the determinantal form of~$P_{(Y,\nu)}(X,\nu;t)$ in the TASEP with second class particles, and (if so) whether or not it is possible to express a multiple sum of $P_{(Y,\nu)}(X,\nu;t)$ over $x_2,\dots,x_N$ with $x_1<x_2<\cdots<x_N$ for f\/ixed~$x_1$ as a closed form of a multiple contour integral.
 \end{Remark}

Throughout this paper, we will use the notation $\dashint$ for $\frac{1}{2\pi i}\int$. The f\/irst result provides an answer to the question \textit{``if the first class particle is initially the leftmost particle, what is the probability that the first particle moves to $x$ during time $t$ without exchanging positions with second class particles?"}
 \begin{Theorem}\label{142-am-518}
 Let $C$ be a counterclockwise circle centered at the origin with radius less than~$1$. Then,
\begin{gather}
\mathbb{P}_{(Y,21\dots 1)}(E_t) = \dashint_C\cdots\dashint_C(1-\xi_1)\prod_{1\leq i<j\leq N}\frac{\xi_j-\xi_i}{1-\xi_i}\nonumber\\
\hphantom{\mathbb{P}_{(Y,21\dots 1)}(E_t) =}{} \times\prod_{i=1}^N\frac{1}{1-\xi_i}\prod_{i=1}^N \big(\xi_i^{x-y_i-1}e^{\varepsilon(\xi_i) t}\big){\rm d}\xi_1\cdots {\rm d}\xi_N,\label{125-am-518}
\end{gather}
where $\varepsilon(\xi_i) = 1/\xi_i - 1$.
\end{Theorem}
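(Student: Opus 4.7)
The plan is to start from the coordinate Bethe-ansatz representation of the transition probability $P_{(Y,21\cdots 1)}(X,21\cdots 1;t)$---the case of matched initial and final species sequences, available from \cite{Chatterjee-Schutz-2010,Tracy-Widom-2013}---and then to perform the multiple sum over the final positions $x_2<\cdots<x_N$ of the second class particles by means of a Tracy--Widom-type symmetrization identity in the spirit of \cite{Nagao-Sasamoto-2004,Tracy-Widom-2008}.

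First, I would observe that on $E_t$ no swap between the first class particle and any second class particle can have occurred: a single swap moves the first class particle one step to the right of a second class particle and, since no subsequent move can re-order them, the first class particle could never regain its leftmost position. Consequently the species sequence at time $t$ is still $21\cdots 1$, and
\begin{gather*}
\mathbb{P}_{(Y,21\cdots 1)}(E_t)=\sum_{x<x_2<\cdots<x_N}P_{(Y,21\cdots 1)}\big((x,x_2,\dots,x_N),21\cdots 1;t\big).
\end{gather*}
I would then substitute the Bethe-ansatz expression for the summand, which represents it as an $N$-fold contour integral around small circles centered at the origin with integrand $\sum_{\sigma\in S_N}A_\sigma(\xi)\prod_i\xi_{\sigma(i)}^{x_i}\xi_i^{-y_i-1}e^{\varepsilon(\xi_i)t}$, where the amplitudes $A_\sigma$ are explicit products of two-body scattering factors dictated by the species sequence $\nu=21\cdots 1$. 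Interchanging sum and integrals (legitimate because the contours have radius strictly less than~$1$), the nested geometric sum $\sum_{x<x_2<\cdots<x_N}\prod_{i\geq 2}\xi_{\sigma(i)}^{x_i}$ telescopes to an explicit rational factor $\prod_{k=2}^N\bigl(\xi_{\sigma(k)}\cdots\xi_{\sigma(N)}\bigr)/\bigl(1-\xi_{\sigma(k)}\cdots\xi_{\sigma(N)}\bigr)$, which combines with the $A_\sigma$ and with the ``diagonal'' factors $\prod_i\xi_i^{x-y_i-1}$ that appear after setting $x_1=x$.

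The main obstacle is the collapse of the resulting $N!$-term sum into the single Vandermonde-type integrand displayed in~(\ref{125-am-518}). This is the analogue, for the species sequence $21\cdots 1$, of \cite[Theorem~4]{Nagao-Sasamoto-2004} and \cite[equation~(1.6)]{Tracy-Widom-2008}, but the amplitudes $A_\sigma$ are not fully symmetric in the $\xi_i$'s---the variable associated with the first class particle plays a distinguished role---so the single-species identities cannot be invoked verbatim. I would prove the required identity by induction on~$N$: separating terms according to the value of $\sigma^{-1}(1)$ and successively deforming the $\xi_1$-contour through the poles at $\xi_1=\xi_j$ (using two-body relations to cancel pairs) and at $\xi_1=1$. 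The broken symmetry is precisely what accounts for the extra prefactor $(1-\xi_1)$ in the final formula, and the principal technical task is verifying that the residues collected along the way assemble correctly into the product $(1-\xi_1)\prod_{i<j}(\xi_j-\xi_i)/(1-\xi_i)\cdot\prod_i(1-\xi_i)^{-1}$.
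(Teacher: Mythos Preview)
Your overall architecture matches the paper's: write $\mathbb{P}_{(Y,21\cdots1)}(E_t)$ as a sum of transition probabilities with unchanged species sequence, insert the Bethe-ansatz contour integral, perform the nested geometric sum, and then collapse the resulting $N!$-term expression into the single product on the right of~(\ref{125-am-518}). Two points deserve comment.

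First, you treat the amplitudes $A_\sigma$ as ``available from \cite{Chatterjee-Schutz-2010,Tracy-Widom-2013}''. The paper regards this as nontrivial: what is needed is the specific diagonal matrix element $[\mathbf{A}_\sigma]_{2^{N-1}+1,2^{N-1}+1}$, and the paper devotes a separate lemma (Lemma~\ref{146-am-520}) to proving by induction on $N$ that
\[
[\mathbf{A}_\sigma]=\operatorname{sgn}(\sigma)\prod_{i=0}^{N-2}\Big(\tfrac{1-\xi_{2+i}}{1-\xi_{\sigma(2+i)}}\Big)^{i}.
\]
This explicit closed form is what makes the subsequent identity tractable, and the paper flags it (Remark~\ref{124810}) as one of the novel contributions. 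Your sketch would need this step spelled out.

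Second, for the collapse of the $N!$ sum, you propose induction on $N$ combined with contour deformation in $\xi_1$, splitting on $\sigma^{-1}(1)$. The paper proceeds differently and purely algebraically: after a change of variables it splits the sum according to $\sigma(N)=\alpha$, recognises the inner $(N{-}1)$-fold sum as exactly the \emph{single-species} Tracy--Widom identity (equation~(1.6) of \cite{Tracy-Widom-2008} with $p=1$) in the variables $\{\xi_i\}_{i\neq\alpha}$, and then reassembles the pieces via a Vandermonde-type determinant lemma (Lemma~\ref{827-am-521}). This buys a short, computation-free proof that piggybacks on the known TASEP identity rather than redoing a residue analysis from scratch. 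Your contour-deformation route may well work, but note that the identity~(\ref{1221-am-525}) you are aiming for is a pointwise algebraic identity in the $\xi_i$'s, not an equality of contour integrals, so ``deforming the $\xi_1$-contour'' would have to be recast as a residue-matching argument for rational functions; the paper's reduction avoids that bookkeeping entirely.
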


 This formula is comparable with that of the TASEP (with a single species). The formula for the TASEP corresponding to (\ref{125-am-518}) is
\begin{gather*}
\mathbb{P}_{(Y,i\dots i)}(F_t) = \dashint_C\cdots\dashint_C(1-\xi_1\cdots\xi_N)\prod_{1\leq i<j\leq N}\frac{\xi_j-\xi_i}{1-\xi_i}\nonumber\\
\hphantom{\mathbb{P}_{(Y,i\dots i)}(F_t) =}{}\times \prod_{i=1}^N\frac{1}{1-\xi_i}\prod_{i=1}^N \big(\xi_i^{x-y_i-1}e^{\varepsilon(\xi_i) t}\big){\rm d}\xi_1\cdots {\rm d}\xi_N,\label{1217-pm-522}
\end{gather*}
where $F_t=\{(X,i\dots i;t)\colon x_1 = x\}$ \cite{Tracy-Widom-2008}.
\begin{Remark}\label{124810}
In order to obtain (\ref{125-am-518}), we need to f\/ind an expression of $P_{(Y,21\dots 1)}(X,21\dots 1;t)$. One novel point of this paper is that we provide an explicit and exact formula of the transition probability $P_{(Y,21\dots 1)}(X,21\dots 1;t)$. Chatterjee and Sch\"{u}tz \cite{Chatterjee-Schutz-2010}, and Tracy and Widom \cite{Tracy-Widom-2013} discussed a method to obtain $P_{(Y,\nu)}(X,\pi;t)$ but did not provide an explicit formula for specif\/ic~$\nu$ and~$\pi$. The transition probabilities of an $N$-particle system (with or without second class par\-tic\-les) are expressed in a compact matrix form of contour integrals as in (\ref{1223-am-519}), and each matrix element is a specif\/ic transition probability. Finding an explicit formula of a matrix element of~$\mathbf{P}_Y(X;t)$ in (\ref{1223-am-519}) means f\/inding an explicit formula of a matrix element of~$\mathbf{A}_{\sigma}$ in~(\ref{1223-am-519}). But this may require a tedious matrix multiplication of several $2^N$ by $2^N$ matrices. To the best of the author's knowledge, there are no known shortcut methods to f\/ind each matrix element of~$\mathbf{A}_{\sigma}$. Lemma \ref{146-am-520} is an important result to provide an explicit formula of $P_{(Y,21\dots 1)}(X,21\dots 1;t)$.
 \end{Remark}

 The second result is obtained by applying a special initial condition for the positions of particles to (\ref{125-am-518}). Let $Y=(y_1,\dots, y_N)$ be the initial positions of particles given by
\begin{gather}\label{107-am-518}
y_i = \begin{cases}
1& \textrm{if}~i=1,\\
i+l& \textrm{if}~i>1
\end{cases}
\end{gather}
for some nonnegative integer $l$. When $l=0$, this condition is called the \textit{step initial condition}. Let us recall that
\begin{gather*}
h_l(\xi_1,\dots,\xi_N) = \sum_{1\leq i_i \leq \cdots \leq i_l \leq N}\xi_{i_1}\xi_{i_2}\cdots \xi_{i_l}
\end{gather*}
is called the $l$th complete symmetric polynomial of $\xi_1,\dots, \xi_N$, and $h_0(\xi_1,\dots,\xi_N)=1$.
\begin{Theorem}\label{144-am-518}
If $Y$ is given by \eqref{107-am-518},
\begin{gather}
 \mathbb{P}_{(Y,21\dots 1)}(E_t) = \frac{(-1)^{N(N-1)/2}}{N!} \dashint_C\cdots\dashint_Ch_l(\xi_1,\dots,\xi_N)\prod_{1\leq i<j\leq N}(\xi_j-\xi_i)^2\prod_{i=1}^N\frac{1}{(\xi_i-1)^{N-1}}\nonumber\\
\hphantom{\mathbb{P}_{(Y,21\dots 1)}(E_t) =}{} \times\prod_{i=1}^N \big(\xi_i^{x-N-l-1}e^{\varepsilon(\xi_i) t}\big){\rm d}\xi_1\cdots {\rm d}\xi_N.\label{534-pm-517}
\end{gather}
\end{Theorem}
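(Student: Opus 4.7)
The plan is to derive \eqref{534-pm-517} from Theorem~\ref{142-am-518} by substituting the initial positions \eqref{107-am-518} into \eqref{125-am-518} and symmetrizing the integrand over the $N$ integration variables. With $y_1=1$ and $y_i=i+l$ for $i\geq 2$, one has
\begin{gather*}
\prod_{i=1}^{N}\xi_i^{x-y_i-1}=\prod_{i=1}^{N}\xi_i^{x-N-l-1}\cdot\xi_1^{N+l-1}\prod_{i=2}^{N}\xi_i^{N-i}.
\end{gather*}
Combining the three sources of $(1-\xi_i)$'s in \eqref{125-am-518}, namely $(1-\xi_1)$, $\prod_{i<j}\frac{1}{1-\xi_i}$, and $\prod_{i}\frac{1}{1-\xi_i}$, gives the denominator $(1-\xi_1)^{N-1}\prod_{i=2}^{N-1}(1-\xi_i)^{N-i+1}(1-\xi_N)$. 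Multiplying top and bottom by $\prod_{i=2}^{N}(1-\xi_i)^{i-2}$ replaces this by $\prod_{i}(1-\xi_i)^{N-1}=\prod_{i}(\xi_i-1)^{N-1}$ (same sign since $N(N-1)$ is even). The integrand of \eqref{125-am-518} thus factors as $S(\xi)\,V(\xi)\,g(\xi)$, where
\begin{gather*}
S(\xi)=\frac{\prod_{i=1}^{N}\xi_i^{x-N-l-1}e^{\varepsilon(\xi_i)t}}{\prod_{i=1}^{N}(\xi_i-1)^{N-1}},\qquad V(\xi)=\prod_{1\leq i<j\leq N}(\xi_j-\xi_i),\qquad g(\xi)=\xi_1^{N+l-1}\prod_{i=2}^{N}\xi_i^{N-i}(1-\xi_i)^{i-2}.
\end{gather*}

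Because the contour $C$ is identical for every variable, the integral is unchanged if the integrand is replaced by the $S_N$-average $\frac{1}{N!}\sum_{\sigma}I(\xi_{\sigma(1)},\dots,\xi_{\sigma(N)})$. Since $S$ is symmetric and $V$ is alternating, the problem reduces to computing the antisymmetrization of $g$, which is the determinant
\begin{gather*}
\Delta(\xi)=\det\bigl(f_i(\xi_j)\bigr)_{1\leq i,j\leq N},\qquad f_1(\xi)=\xi^{N+l-1},\qquad f_i(\xi)=\xi^{N-i}(1-\xi)^{i-2}\ (i\geq 2).
\end{gather*}

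The key calculation is the evaluation of $\Delta(\xi)$. Each $f_i$ with $i\geq 2$ is a polynomial of degree at most $N-2$ whose lowest-order monomial is $\xi^{N-i}$, so an upper-triangular sequence of row operations (adding appropriate linear combinations of rows $2,\dots,k-1$ to row $k$) reduces rows $2,\dots,N$ to the pure monomials $\xi^{N-2},\xi^{N-3},\dots,1$ without changing $\Delta$. The resulting alternant has exponent sequence $(N+l-1,N-2,N-3,\dots,0)$, and by the Jacobi bialternant formula for the partition $\lambda=(l,0,\dots,0)$ (whose Schur polynomial is $s_\lambda=h_l$) it equals $h_l(\xi)\det(\xi_j^{N-i})$. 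Reversing the row order of $(\xi_j^{N-i})$ yields the standard Vandermonde matrix and contributes the sign $(-1)^{N(N-1)/2}$ of the reversal permutation, so $\Delta(\xi)=(-1)^{N(N-1)/2}h_l(\xi)\,V(\xi)$. Substituting this back produces the integrand $\frac{(-1)^{N(N-1)/2}}{N!}\,S(\xi)\,h_l(\xi)\,V(\xi)^2$, which is precisely \eqref{534-pm-517}.

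The main obstacle is really only the first step: carefully tracking the exponents of $(1-\xi_i)$ coming from the three distinct sources in \eqref{125-am-518} and identifying the correct ``deficit'' $(1-\xi_i)^{i-2}$ that must be absorbed into $g$ in order to make the denominator symmetric. The determinant evaluation is then a routine row reduction plus the bialternant formula; as a sanity check, the $N=2$ case reduces directly to the elementary identity $\xi_1^{l+1}-\xi_2^{l+1}=-(\xi_2-\xi_1)h_l(\xi_1,\xi_2)$.
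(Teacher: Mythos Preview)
Your argument is correct and follows essentially the same route as the paper: substitute the initial data \eqref{107-am-518} into \eqref{125-am-518}, balance the $(1-\xi_i)$-powers to make the denominator symmetric, symmetrize over $S_N$, and recognize the antisymmetrized numerator as a bialternant for $\lambda=(l,0,\dots,0)$. The only cosmetic difference is in the determinant evaluation: the paper expands $\prod_{i\ge 2}(1-\xi_{\sigma(i)})^{i-2}$ into monomials and shows via Lemma~\ref{1123-pm-529}(a) that every term with a nonzero $k_i$ produces a determinant with two equal columns, whereas you accomplish the same reduction by upper-triangular row operations; these are equivalent linear-algebra arguments, and your version is arguably the cleaner phrasing.
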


If $Y$ is given by the step initial condition, then (\ref{534-pm-517}) is very similar to a known formula in the TASEP
\cite[Remark on p.~839]{Tracy-Widom-2008}. (Also, see \cite{Johansson-2000,Nagao-Sasamoto-2004} for similar results.)
\begin{Corollary}\label{143-am-519}
If $Y$ is given by \eqref{107-am-518} with $l=0$, then
\begin{gather}
 \mathbb{P}_{(Y,21\dots 1)}(E_t) = \frac{(-1)^{N(N-1)/2}}{N!} \dashint_C\cdots\dashint_C\prod_{1\leq i<j\leq N}(\xi_j-\xi_i)^2\prod_{i=1}^N\frac{1}{(\xi_i-1)^{N-1}}\nonumber\\
\hphantom{\mathbb{P}_{(Y,21\dots 1)}(E_t) =}{}\times\prod_{i=1}^N \big(\xi_i^{x-N-1}e^{\varepsilon(\xi_i) t}\big){\rm d}\xi_1\cdots {\rm d}\xi_N.\label{530-pm-517}
\end{gather}
\end{Corollary}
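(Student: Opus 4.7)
The plan is to deduce Corollary \ref{143-am-519} as an immediate specialization of Theorem \ref{144-am-518}. First I would observe that substituting $l=0$ into the initial condition (\ref{107-am-518}) gives $y_1 = 1$ and $y_i = i$ for $i>1$, which is by definition the step initial condition; hence the hypothesis of the corollary matches precisely the $l=0$ instance of Theorem \ref{144-am-518}.

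Next I would substitute $l=0$ directly into formula (\ref{534-pm-517}). Exactly two factors depend explicitly on $l$: the complete symmetric polynomial $h_l(\xi_1,\dots,\xi_N)$, which reduces to $h_0(\xi_1,\dots,\xi_N) = 1$ by the convention stated just before Theorem \ref{144-am-518}; and the monomial factor $\xi_i^{x-N-l-1}$, whose exponent collapses to $x-N-1$. Every remaining ingredient---the prefactor $(-1)^{N(N-1)/2}/N!$, the Vandermonde square $\prod_{i<j}(\xi_j-\xi_i)^2$, the product $\prod_i (\xi_i-1)^{-(N-1)}$, the exponential weights $e^{\varepsilon(\xi_i)t}$, and the contour $C$ itself---is independent of $l$ and therefore carries over verbatim. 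The result matches (\ref{530-pm-517}) on the nose.

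Since the corollary is obtained by a one-step substitution, there is no genuine obstacle in its proof; the entire analytic content has been absorbed into Theorem \ref{144-am-518}. Any real difficulty lies upstream---for instance in re-deriving (\ref{534-pm-517}) from Theorem \ref{142-am-518} by plugging the specific $y_i$ from (\ref{107-am-518}) into (\ref{125-am-518}) and reorganizing $\prod_i \xi_i^{-y_i-1}$ together with $\prod_{i<j}(\xi_j - \xi_i)/(1-\xi_i)$ into the Vandermonde square, the $(\xi_i-1)^{-(N-1)}$ factors, and the $h_l$ term---but that work belongs to the theorem rather than the corollary.
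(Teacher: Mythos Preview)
Your proposal is correct and matches the paper's own proof essentially verbatim: the paper also derives the corollary by simply setting $l=0$ in formula (\ref{534-pm-517}), using $h_0=1$. There is nothing more to add.
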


Also, (\ref{530-pm-517}) can be written as
\begin{gather*}
 \mathbb{P}_{(Y,21\dots 1)}(E_t) = (-1)^{N(N-1)/2}\det\Bigg[\dashint_C\xi^{i+j+x-N-1}(\xi-1)^{-(N-1)}e^{(1/\xi-1)t}{\rm d}\xi \Bigg]_{i,j=0}^{N-1}
\end{gather*}
by the same method as in \cite[Remark on p.~839]{Tracy-Widom-2008}.
\begin{Remark}
 It would be interesting to see if $\mathbb{P}_{(Y,\nu)}(E_t^{\pi})$ where $E_t^{\pi} = \{(X,\pi;t)\colon x_1=x\}$ for arbitrary $\nu$ and $\pi$ can be expressed as a determinant. To do so, at f\/irst, we need to f\/ind an explicit formula of $P_{(Y,\nu)}(X,\pi;t)$. However, as mentioned in Remark~\ref{124810}, there are no known methods to f\/ind $P_{(Y,\nu)}(X,\pi;t)$ except a tedius matrix computation. A~partial generalization of $\mathbb{P}_{(Y,21\dots 1)}(E_t)$ was made a few months after the f\/irst version of this paper was posted~\cite{Lee-2017-July}.
 \end{Remark}

\subsection{Previous results}
\looseness=-1 There are some previous results on the TASEP (or the ASEP) with a dif\/ferent initial conf\/i\-gu\-ra\-tion. Suppose that initially all negative integers are occupied by f\/irst class particles, the origin is occupied by a second class particle, and all positive integers are empty. In this case, the position of the second class particle at time $t$ is nontrivial because it is af\/fected by f\/irst class particles' movements. Tracy and Widom obtained the exact expression of the distribution of the position of the second class particle at time $t$ in the ASEP \cite{Tracy-Widom-2009}. Mountford and Guiol proved the strong law of large numbers for the position of the second class particle in the TASEP \cite{Mountford-Guiol-2005}. On the other hand, for the initial condition considered in this paper, the probability distribution of the position of the (only) f\/irst class particle at time $t$ can be trivially obtained because it is a free particle if we do not care about the order of particles. But, the probability for the non-intersecting paths of the particles from the initial conf\/iguration to any conf\/igurations such that the leftmost particle (the f\/irst class particle) is at a specif\/ic position at time $t$ is nontrivial as in Theorem~\ref{142-am-518}.
 \begin{Remark}
 The initial conf\/iguration $(Y,\nu)$ for $Y= (\dots,-2,-1)$ and $\nu = (\dots 221\dots 1)$, and the probability formula of the position of the $m^{\textrm{th}}$ rightmost f\/irst class particle at time $t$ are of particular interest. If we do not care about the order of the f\/irst particles and the second class particle at time $t$, the system can be thought of as the TASEP without second class particles because the f\/irst class particles \textit{see} the second class particles as holes. Hence, the distribution of the $m^{\textrm{th}}$ rightmost f\/irst class particle's position is trivially obtained from the previous results of the TASEP and the Tracy--Widom distribution should appear in the asymptotic studies. If we consider an event that the initial order of particles does not change over time, the probability of the $m^{\textrm{th}}$ rightmost f\/irst class particle's position at time $t$ conf\/ined in this even is nontrivial.
\end{Remark}

{\bf Organization of the paper.}
In Section~\ref{section2}, we provide some preliminary background of the integrability and the transition probabilities of the TASEP with second class particles. Although the majority of Section~\ref{section2} is essentially overlapped with \cite{Chatterjee-Schutz-2010,Tracy-Widom-2013}, we introduce this background for the notations in this paper and self-containedness. In Section~\ref{section3}, we derive (\ref{125-am-518}), (\ref{534-pm-517}), and~(\ref{530-pm-517}).

\section{Preliminary}\label{section2}
Since the state space of the process is countable, we may view $P_{(Y,\nu)}(X,\pi;t)$ as elements of an inf\/inite matrix. This inf\/inite matrix is denoted by $\mathbf{P}(t)$ and we assume that $P_{(Y,\nu)}(X,\pi;t)$ is an element at the $(Y,\nu)^{\textrm{th}}$ column and at the $(X,\pi)^{\textrm{th}}$ row. We denote a submatrix of $\mathbf{P}(t)$ with elements $P_{(Y,\nu)}(X,\pi;t)$ for f\/ixed $X$ and $Y$ as $\mathbf{P}_Y(X;t)$. Hence, $\mathbf{P}_Y(X;t)$ is a $2^N \times 2^N$ matrix. We assume that the elements of $\mathbf{P}_Y(X;t)$ are listed in the reverse lexicographic order of f\/inite sequences $\nu$ and $\pi$ from left to right and from top to bottom, respectively (See (\ref{424am58}) below for an example). If $\mathbf{G}$ is the generator of the process, then $\mathbf{P}(t)$ satisf\/ies the forward equation
\begin{gather}
\frac{{\rm d}}{{\rm d}t}\mathbf{P}(t) = \mathbf{G}\mathbf{P}(t) \label{1141-223}
\end{gather}
and it is subject to the initial condition $\mathbf{P}(0) = \mathbf{I}$ where $\mathbf{I}$ is the identity matrix.
The def\/inition of the TASEP with second class particles implies that $\mathbf{G}=[a_{jk}]_{j,k
\in \mathbb{Z}}$ is a band matrix with $\sup\limits_{j,k}|a_{jk}| = N<
\infty$ (so, $\mathbf{G}$ induces a bounded operator on $l_2(\mathbb{Z})$), and the solution of (\ref{1141-223}) is simply given by $\mathbf{P}(t)=e^{t\mathbf{G}}$. We want to f\/ind an explicit formula of each matrix element of $e^{t\mathbf{G}}$, that is, the transition probabilities $P_{(Y,\nu)}(X,\pi;t)$ by using the standard method, the coordinate Bethe Ansatz \cite{Schutz-1997,Tracy-Widom-2008}.
\subsection{Two-particle system}
By the def\/inition of the TASEP with second class particles, (\ref{1141-223}) implies that a submatrix $\mathbf{P}_Y(x_1,x_2;t)$ given by
\begin{gather}
\left[
 \begin{matrix}
 {P}_{(Y,11)}(x_1,x_2,11;t) & {P}_{(Y,12)}(x_1,x_2,11;t) & {P}_{(Y,21)}(x_1,x_2,11;t) & {P}_{(Y,22)}(x_1,x_2,11;t) \\
 {P}_{(Y,11)}(x_1,x_2,12;t) & {P}_{(Y,12)}(x_1,x_2,12;t) & {P}_{(Y,21)}(x_1,x_2,12;t) & {P}_{(Y,22)}(x_1,x_2,12;t) \\
 {P}_{(Y,11)}(x_1,x_2,21;t) & {P}_{(Y,12)}(x_1,x_2,21;t) & {P}_{(Y,21)}(x_1,x_2,21;t) & {P}_{(Y,22)}(x_1,x_2,21;t) \\
 {P}_{(Y,11)}(x_1,x_2,22;t) & {P}_{(Y,12)}(x_1,x_2,22;t) & {P}_{(Y,21)}(x_1,x_2,22;t) & {P}_{(Y,22)}(x_1,x_2,22;t) \\
 \end{matrix}
 \right]\!\!\!\! \label{424am58}
\end{gather}
 satisf\/ies one of the following equations, depending on $(x_1,x_2)$:
\begin{gather}
\frac{{\rm d}}{{\rm d}t} \mathbf{P}_{Y}(x_1,x_2;t) = \mathbf{P}_{Y}(x_1-1,x_2;t) + \mathbf{P}_{Y}(x_1,x_2-1;t) \nonumber\\
\hphantom{\frac{{\rm d}}{{\rm d}t} \mathbf{P}_{Y}(x_1,x_2;t) =}{} - 2\mathbf{P}_{Y}(x_1,x_2;t), \qquad x_1 <x_2-1, \label{300am58}\\
\frac{{\rm d}}{{\rm d}t} \mathbf{P}_{Y}(x_1,x_2;t) = \mathbf{P}_{Y}(x_1-1,x_2;t) - \left[
 \begin{matrix}
 1 & 0 & 0 & 0 \\
 0 & 1 & -1 & 0 \\
 0 & 0 & 2 & 0 \\
 0 & 0 & 0 & 1 \\
 \end{matrix}
 \right]\mathbf{P}_{Y}(x_1,x_2;t), \qquad x_1 = x_2-1.\!\!\!\! \label{1231-224}
\end{gather}
Let $\mathbf{U}(x_1,x_2;t)$ be a $4 \times 4$ matrix whose elements are functions def\/ined on $\mathbb{Z}^2 \times [0,\infty)$. If we suppose that $\mathbf{U}(x_1,x_2;t)$ is a solution of
\begin{gather} \label{eq1}
\frac{{\rm d}}{{\rm d}t} \mathbf{U}(x_1,x_2;t) = \mathbf{U}(x_1-1,x_2;t) + \mathbf{U}(x_1,x_2-1;t) - 2\mathbf{U}(x_1,x_2;t)
\end{gather}
for all $(x_1,x_2) \in \mathbb{Z}^2$ and is subject to
\begin{gather*}
\mathbf{U}(x,x;t) -2\mathbf{U}(x,x+1;t) = -\left[
 \begin{matrix}
 1 & 0 & 0 & 0 \\
 0 & 1 & -1 & 0 \\
 0 & 0 & 2 & 0 \\
 0 & 0 & 0 & 1 \\
 \end{matrix}
 \right]
\mathbf{U}(x,x+1;t)\qquad \text{for all}\quad x \in \mathbb{Z},
\end{gather*}
so that
\begin{gather}
\mathbf{U}(x,x;t) = \left[
 \begin{matrix}
 1 & 0 & 0 & 0 \\
 0 & 1 & 1 & 0 \\
 0 & 0 & 0 & 0 \\
 0 & 0 & 0 & 1 \\
 \end{matrix}
 \right]
\mathbf{U}(x,x+1;t):=\mathbf{B}\mathbf{U}(x,x+1;t), \label{22519}
\end{gather}
then it is obvious that $\mathbf{U}(x_1,x_2;t)$ for $ x_1<x_2-1$ satisf\/ies (\ref{300am58}), and both (\ref{eq1}) and (\ref{22519}) imply that $\mathbf{U}(x_1,x_2;t)$ for $x=x_1 = x_2-1$ satisf\/ies~(\ref{1231-224}). An ansatz of variables separation, $\mathbf{U}(x_1,x_2;t) = \mathbf{X}(x_1,x_2)T(t)$ where $\mathbf{X}(x_1,x_2)$ is a $4\times 4$ matrix and $T(t)$ is a scalar function of~$t$, reduces the matrix equation (\ref{eq1}) to a linear matrix recurrence relation of two variables
\begin{gather}
\varepsilon \mathbf{X}(x_1,x_2) = \mathbf{X}(x_1-1,x_2) + \mathbf{X}(x_1,x_2-1) - 2\mathbf{X}(x_1,x_2) \label{1249-am-523}
\end{gather}
and $T(t) = e^{\varepsilon t}$ where $\varepsilon$ is to be determined. By an ansatz from the theory of the linear recurrence relation we see that $\mathbf{I}_4\xi_1^{x_1}\xi_2^{x_2}$ is a solution of (\ref{1249-am-523}) where $\mathbf{I}_4$ is the $4\times 4$ identity matrix (we will write $\mathbf{I}_n$ for the $n \times n$ identity matrix), and $\xi_1$ and $\xi_2$ are complex numbers with $0<|\xi_1|<1$ and $0<|\xi_2|<1$ (the reason for this restriction on $\xi_1$ and $\xi_2$ will be clear soon), and in this case, $\varepsilon$ is given by
\begin{gather}
\varepsilon = \frac{1}{\xi_1} + \frac{1}{\xi_2} - 2. \label{1247-am-523}
\end{gather}
In fact, by the linearity of the equation, for any $4 \times 4$ matrix $\mathbf{A}_{12}$, independent of $x_1$ and $x_2$, $\mathbf{A}_{12}\xi_1^{x_1}\xi_2^{x_2}$ is also a solution of (\ref{1249-am-523}). Also, we observe that $\mathbf{I}_4\xi_2^{x_1}\xi_1^{x_2}$ is a solution of (\ref{1249-am-523}) with the same $\varepsilon$ in (\ref{1247-am-523}). Thus, for a general solution of (\ref{eq1}), we put
\begin{gather}
\mathbf{U}(x_1,x_2;t) = \big(\mathbf{A}_{12}\xi_1^{x_1}\xi_2^{x_2} +\mathbf{A}_{21}\xi_2^{x_1}\xi_1^{x_2}\big)e^{\varepsilon t}, \label{22518}
\end{gather}
where $\mathbf{A}_{21}$ is another $4 \times 4$ matrix, independent of $x_1$ and $x_2$.
Substituting (\ref{22518}) into (\ref{22519}), we obtain
\begin{gather*}
(\mathbf{I}_4-\xi_2\mathbf{B})\mathbf{A}_{12} = -(\mathbf{I}_4-\xi_1\mathbf{B})\mathbf{A}_{21},
\end{gather*}
 and
\begin{gather}
\mathbf{A}_{21} = -(\mathbf{I}_4-\xi_1\mathbf{B})^{-1}(\mathbf{I}_4-\xi_2\mathbf{B})\mathbf{A}_{12} = \left[
 \begin{matrix}
 -\frac{1-\xi_2}{1-\xi_1} & 0 & 0 & 0 \\
0 & -\frac{1-\xi_2}{1-\xi_1} & \frac{\xi_2-\xi_1}{1-\xi_1} & 0 \\
 0 & 0 & -1 & 0 \\
 0 & 0 & 0 & -\frac{1-\xi_2}{1-\xi_1} \\
 \end{matrix}
 \right]\mathbf{A}_{12}.\label{102224}
\end{gather}
\begin{Definition}
\begin{gather}
\mathbf{S}_{\beta\alpha} := \left[
 \begin{matrix}
 -\frac{1-\xi_{\beta}}{1-\xi_{\alpha}} & 0 & 0 & 0 \\
0 & -\frac{1-\xi_{\beta}}{1-\xi_{\alpha}} & \frac{\xi_{\beta}-\xi_{\alpha}}{1-\xi_{\alpha}} & 0 \\
 0 & 0 & -1 & 0 \\
 0 & 0 & 0 & -\frac{1-\xi_{\beta}}{1-\xi_{\alpha}} \\
 \end{matrix}
\right]\qquad \textrm{and} \qquad S_{\beta\alpha}:=-\frac{1-\xi_{\beta}}{1-\xi_{\alpha}}.
 \label{546pm511}
\end{gather}
\end{Definition}

Hence, (\ref{22518}) with (\ref{102224}) satisf\/ies (\ref{300am58}) and (\ref{1231-224}). Next, we put $\mathbf{A}_{12} = \mathbf{I}_4\xi_1^{-y_1-1}\xi_2^{-y_2-1}$ for $Y=(y_1,y_2)$ in (\ref{22518}) and integrate componentwise over counterclockwise circles $C$ centered at the origin with radii less than 1, to form a matrix of contour integrals written
\begin{gather}\label{123836}
 \dashint_C\dashint_C \big( \xi_1^{x_1-y_1-1}\xi_2^{x_2-y_2-1}\mathbf{I}_4
 + \xi_2^{x_1-y_2-1}\xi_1^{x_2-y_1-1}\mathbf{S}_{21}\big) e^{\varepsilon t}{\rm d}\xi_1{\rm d}\xi_2.
\end{gather}
Recall that the matrix (\ref{424am58}) satisf\/ies the initial condition which states that at $t=0$, if $x_1= y_1$ and $x_2 = y_2$, then the matrix (\ref{424am58}) is the identity matrix and, otherwise, it is the zero matrix. Recalling that $x_1 \geq y_1$, $x_2 \geq y_2$, $x_1<x_2,$ and $y_1<y_2$, we can show that (\ref{123836}) satisf\/ies this initial condition. Hence,
\begin{gather*}
\mathbf{P}_{Y}(x_1,x_2;t)= \dashint_C\dashint_C \big( \xi_1^{x_1-y_1-1}\xi_2^{x_2-y_2-1}\mathbf{I}_4
 + \xi_2^{x_1-y_2-1}\xi_1^{x_2-y_1-1}\mathbf{S}_{21}\big) e^{\varepsilon t}{\rm d}\xi_1{\rm d}\xi_2.
\end{gather*}

\subsection{Three-particle system}
Let $\mathbf{U}(x_1,x_2,x_3;t)$ be an $8 \times 8$ matrix whose elements are functions def\/ined on $\mathbb{Z}^3 \times [0,\infty)$. Suppose that $\mathbf{U}(x_1,x_2,x_3;t)$ is a solution of
\begin{gather}
\frac{{\rm d}}{{\rm d}t} \mathbf{U}(x_1,x_2,x_3;t) = \mathbf{U}(x_1-1,x_2,x_3;t) + \mathbf{U}(x_1,x_2-1,x_3;t)\nonumber\\
\hphantom{\frac{{\rm d}}{{\rm d}t} \mathbf{U}(x_1,x_2,x_3;t) =}{} + \mathbf{U}(x_1,x_2-1,x_3-1;t) - 3\mathbf{U}(x_1,x_2,x_3;t)\label{552-224}
\end{gather}
and is subject to
\begin{gather}
\mathbf{U}(x,x,x_3;t) = (\mathbf{B} \otimes \mathbf{I}_{2}) \mathbf{U}(x,x+1,x_3;t)\qquad \text{for all} \quad x,x_3 \in \mathbb{Z}, \nonumber\\
\mathbf{U}(x_1,x,x;t) = (\mathbf{I}_{2} \otimes \mathbf{B} ) \mathbf{U}(x_1,x,x+1;t) \qquad \text{for all} \quad x_1,x \in \mathbb{Z},\label{1123-225}
\end{gather}
where $\otimes$ means the tensor product of matrices. Then, as in $N=2$, it is possible to show that for each $(x_1,x_2,x_3) \in \mathbb{W}^3$, $\mathbf{U}(x_1,x_2,x_3;t)$ satisf\/ies the dif\/ferential equation for $\mathbf{P}_Y(x_1,x_2,x_3;t)$. As the Bethe Ansatz solution of (\ref{552-224}), we put
\begin{gather}\label{1118-225}
\mathbf{U}(x_1,x_2,x_3;t) = \sum_{\sigma\in {S}_3}\mathbf{A}_{\sigma}\xi_{\sigma(1)}^{x_1}\xi_{\sigma(2)}^{x_2}\xi_{\sigma(3)}^{x_3}e^{\varepsilon t} ,
\end{gather}
where
\begin{gather*}
\varepsilon = \frac{1}{\xi_1} + \frac{1}{\xi_2} + \frac{1}{\xi_3} - 3
\end{gather*}
and $\xi_i,(i=1,2,3)$ are nonzero complex numbers with $0<|\xi_i|<1$, and $\mathbf{A}_{\sigma}$ are $8 \times 8$ matrices of complex numbers independent of $x_1$, $x_2$, $x_3$. Here, the sum is over all permutations $\sigma$ in the symmetric group ${S}_3$. Substituting (\ref{1118-225}) into (\ref{1123-225}), we obtain
\begin{gather}
\mathbf{A}_{213} = -(\mathbf{I}_8 - \mathbf{B}\otimes \mathbf{I}_2\xi_1)^{-1}(\mathbf{I}_8 - \mathbf{B}\otimes \mathbf{I}_2\xi_2)\mathbf{A}_{123}, \nonumber\\
\mathbf{A}_{312} = -(\mathbf{I}_8 - \mathbf{B}\otimes \mathbf{I}_2\xi_1)^{-1}(\mathbf{I}_8 - \mathbf{B}\otimes \mathbf{I}_2\xi_3)\mathbf{A}_{132}, \nonumber\\
\mathbf{A}_{321} = -(\mathbf{I}_8 - \mathbf{B}\otimes \mathbf{I}_2\xi_2)^{-1}(\mathbf{I}_8 - \mathbf{B}\otimes \mathbf{I}_2\xi_3)\mathbf{A}_{231},\label{316am59}
\end{gather}
and
\begin{gather}
\mathbf{A}_{132} = -(\mathbf{I}_8 - \mathbf{I}_2 \otimes \mathbf{B}\xi_2)^{-1}(\mathbf{I}_8 - \mathbf{I}_2 \otimes \mathbf{B}\xi_3)\mathbf{A}_{123},\nonumber \\
 \mathbf{A}_{231} = -(\mathbf{I}_8 - \mathbf{I}_2 \otimes \mathbf{B}\xi_1)^{-1}(\mathbf{I}_8 - \mathbf{I}_2 \otimes \mathbf{B}\xi_3)\mathbf{A}_{213}, \nonumber\\
 \mathbf{A}_{321} = -(\mathbf{I}_8 - \mathbf{I}_2 \otimes \mathbf{B}\xi_1)^{-1}(\mathbf{I}_8 - \mathbf{I}_2 \otimes \mathbf{B}\xi_2)\mathbf{A}_{312}. \label{317am59}
\end{gather}
Using the properties of the tensor product of (invertible) linear operators
\begin{gather*}
 (\mathbf{A}\otimes \mathbf{B})(\mathbf{C} \otimes \mathbf{D}) = \mathbf{AC}\otimes \mathbf{BD},\qquad
(\mathbf{A}\otimes \mathbf{B})^{-1} = \mathbf{A}^{-1}\otimes \mathbf{B}^{-1},
\end{gather*}
and recalling the notation of $\mathbf{S}_{\beta\alpha}$ in (\ref{546pm511}), we have
\begin{gather}
\mathbf{I}_2 \otimes \mathbf{S}_{\beta\alpha} = -\mathbf{I}_2 \otimes(\mathbf{I}_4 - \mathbf{B}\xi_{\alpha})^{-1}(\mathbf{I}_4 - \mathbf{B}\xi_{\beta}) = -(\mathbf{I}_8 - \mathbf{I}_2 \otimes \mathbf{B}\xi_{\alpha})^{-1}(\mathbf{I}_8 - \mathbf{I}_2 \otimes \mathbf{B}\xi_{\beta}),\label{1228-pm-523}\\
\mathbf{S}_{\beta\alpha} \otimes\mathbf{I}_2= -(\mathbf{I}_4 - \mathbf{B}\xi_{\alpha})^{-1}(\mathbf{I}_4 - \mathbf{B}\xi_{\beta})\otimes \mathbf{I}_2 = -(\mathbf{I}_8 - \mathbf{B}\otimes \mathbf{I}_2\xi_{\alpha})^{-1}(\mathbf{I}_8 - \mathbf{B}\otimes \mathbf{I}_2\xi_{\beta}).\label{1228-pm-525}
\end{gather}
Let $\alpha = \sigma(i)$, $\beta = \sigma(i+1)$ and let $T_i \in {S}_n$ be a simple transposition for any symmetric group~$S_n$ such that $(T_i\sigma)(i)=\sigma(i+1)$ and $(T_i\sigma)(i+1)=\sigma(i)$, and $(T_i\sigma)(k)=\sigma(k)$ for $k\neq i,i+1$. Then, (\ref{316am59}) and (\ref{317am59}) can be simply written as
\begin{gather*}
\mathbf{A}_{T_1\sigma} = (\mathbf{S}_{\beta\alpha} \otimes\mathbf{I}_2)\mathbf{A}_{\sigma},\qquad
\mathbf{A}_{T_2\sigma}= (\mathbf{I}_2 \otimes \mathbf{S}_{\beta\alpha})\mathbf{A}_{\sigma},
\end{gather*}
respectively. It can be shown that (\ref{1228-pm-523}) and (\ref{1228-pm-525}) satisfy the following relations by simple computations
\begin{alignat}{3}
& \textrm{(i)} \quad && (\mathbf{S}_{\gamma\beta} \otimes \mathbf{I}_2)(\mathbf{I}_2 \otimes \mathbf{S}_{\gamma\alpha})(\mathbf{S}_{\beta\alpha} \otimes \mathbf{I}_2) = (\mathbf{I}_2 \otimes \mathbf{S}_{\beta\alpha})(\mathbf{S}_{\gamma\alpha} \otimes \mathbf{I}_2)(\mathbf{I}_2 \otimes \mathbf{S}_{\gamma\beta}), &\label{134-am-529}\\
& \textrm{(ii)}\quad && (\mathbf{S}_{\beta\alpha} \otimes \mathbf{I}_2)(\mathbf{S}_{\alpha\beta} \otimes \mathbf{I}_2) = \mathbf{I}_8=( \mathbf{I}_2 \otimes\mathbf{S}_{\beta\alpha})( \mathbf{I}_2 \otimes\mathbf{S}_{\alpha\beta}).& \label{135-am-529} \end{alignat}
 Hence, we have the following expressions for $\mathbf{A}_{\sigma}$:
\begin{gather*}
\mathbf{A}_{213}= (\mathbf{S}_{21} \otimes\mathbf{I}_2)\mathbf{A}_{123}, \nonumber\\
\mathbf{A}_{132} = (\mathbf{I}_2 \otimes \mathbf{S}_{32})\mathbf{A}_{123},\nonumber \\
\mathbf{A}_{312} = (\mathbf{S}_{31} \otimes\mathbf{I}_2)(\mathbf{I}_2 \otimes \mathbf{S}_{32})\mathbf{A}_{123},\nonumber\\
\mathbf{A}_{231} = (\mathbf{I}_2 \otimes \mathbf{S}_{31})(\mathbf{S}_{21} \otimes\mathbf{I}_2)\mathbf{A}_{123},\nonumber\\
 \mathbf{A}_{321} = (\mathbf{S}_{32} \otimes\mathbf{I}_2)(\mathbf{I}_2 \otimes \mathbf{S}_{31})(\mathbf{S}_{21} \otimes\mathbf{I}_2)\mathbf{A}_{123}.
\end{gather*}
If we put $\mathbf{A}_{123} = \mathbf{I}_8\prod\limits_{i=1}^3\xi_i^{-y_i-1}$ and integrate (\ref{1118-225}) over counterclockwise circles $C$ centered at the origin with radii less than 1, then we obtain
\begin{gather*}
\mathbf{P}_Y(x_1,x_2,x_3;t) = \dashint_C\dashint_C \dashint_C \sum_{\sigma\in {S}_3}\mathbf{A}_{\sigma}\prod_{i=1}^3\xi_{\sigma(i)}^{x_i}e^{\varepsilon t} {\rm d}\xi_1{\rm d}\xi_2{\rm d}\xi_3.
\end{gather*}

\subsection[$N$-particle system]{$\boldsymbol{N}$-particle system}
\subsubsection{Bethe Ansatz solution}
Let $\mathbf{U}(x_1,\dots,x_N;t)$ be a $2^N \times 2^N$ matrix whose elements are functions def\/ined on $\mathbb{Z}^N \times [0,\infty)$. Suppose that $\mathbf{U}(x_1,\dots,x_N;t)$ is a solution of
\begin{gather*}
\frac{{\rm d}}{{\rm d}t} \mathbf{U}(x_1,\dots,x_N;t) = \sum_{i=1}^N\mathbf{U}(x_1,\dots,x_{i-1},x_{i}-1,x_{i+1},\dots,x_N;t) - N\mathbf{U}(x_1,\dots,x_N;t) 
\end{gather*}
and is subject to
\begin{gather}
 \mathbf{U}(x_1,\dots,x_{i-1},x_i,x_i,x_{i+2},\dots, x_N;t) = \mathbf{I}_2^{ \otimes (i-1)} \otimes \mathbf{B} \otimes \mathbf{I}_2^{ \otimes (N-i-1)}\nonumber\\
\qquad{} \times \mathbf{U}(x_1,\dots,x_{i-1},x_i,x_i+1,x_{i+2},\dots, x_N;t) \qquad \text{for all $i=1,\dots, N-1$}.\label{1038-31}
\end{gather}
 Then, it is possible to show that for each $(x_1,\dots,x_N) \in \mathbb{W}^N$, $\mathbf{U}(x_1,\dots,x_N;t)$ satisf\/ies the dif\/ferential equation for $\mathbf{P}_Y(x_1,\dots,x_N;t)$. For the initial positions of particles $Y = (y_1,\dots,y_N)$ $\in \mathbb{W}^N$, we put the Bethe Ansatz solution
\begin{gather}
\mathbf{U}(x_1,\dots,x_N;t) = \sum_{\sigma\in {S}_N}\mathbf{A}_{\sigma}\prod_{i=1}^N\big(\xi_{\sigma(i)}^{x_i-y_{\sigma(i)}-1}e^{\varepsilon(\xi_i) t}\big), \label{1032-31}
\end{gather}
where
\begin{gather*}
\varepsilon(\xi_i) = \frac{1}{\xi_i}-1
\end{gather*}
and
 $\xi_i$, $i=1,\dots, N$, are complex numbers with $0<|\xi_i|<1$, and $\mathbf{A}_{\sigma}$'s are $2^N \times 2^N$ matrices of complex numbers.

 \subsubsection[Matrices $\mathbf{A}_{\sigma}$]{Matrices $\boldsymbol{\mathbf{A}_{\sigma}}$} Let $T_i$, $i=1,\dots, N-1$, be simple transpositions in~$S_N$. It is well known that $T_1,\dots, T_{N-1}$ generate $S_N$ and satisfy the following relations \cite[Chapter~4]{Kassel-Turaev-2008}: for all $i,j=1,\dots, N-1$,
\begin{alignat}{3}
& T_iT_j = T_jT_i \qquad && \textrm{if} \quad |i - j| \geq 2,& \nonumber\\
&T_iT_jT_i = T_j T_iT_j \qquad && \textrm{if} \quad |i-j| = 1,& \nonumber\\
& T_i^2 = 1.&&&\label{1141-pm-523}
\end{alignat}
Substituting (\ref{1032-31}) into (\ref{1038-31}), we see that (\ref{1032-31}) satisf\/ies (\ref{1038-31}) provided that
\begin{gather}
\mathbf{A}_{T_i\sigma} =-\big(\mathbf{I}_{2^N} - \mathbf{I}_2^{ \otimes (i-1)} \otimes \mathbf{B} \otimes \mathbf{I}_2^{ \otimes (N-i-1)} \xi_{\beta}\big)^{-1} \big(\mathbf{I}_{2^N} - \mathbf{I}_2^{ \otimes (i-1)} \otimes \mathbf{B} \otimes \mathbf{I}_2^{ \otimes (N-i-1)} \xi_{\alpha}\big) \mathbf{A}_{\sigma}\nonumber \\
\hphantom{\mathbf{A}_{T_i\sigma}}{} = \big(\mathbf{I}_{2^{(i-1)}} \otimes \mathbf{S}_{\beta\alpha} \otimes \mathbf{I}_{2^{(N-i-1)}}\big)\mathbf{A}_{\sigma},\label{234-pm-523}
\end{gather}
where $\alpha =\sigma(i)$ and $\beta = \sigma(i+1)$ for all $\sigma$ and all $i=1,\dots, N-1$. Since $T_1,\dots, T_{N-1}$ generate~$S_N$, for each given $\sigma\in S_N$ there exists a f\/inite sequence $(a_i)_{i=1}^n$ of integers $1,\dots, N-1$ such that
\begin{gather}
\sigma = T_{a_n}\cdots T_{a_1}. \label{1140-pm-523}
\end{gather}
(Here, the representation (\ref{1140-pm-523}) is not unique because of (\ref{1141-pm-523}).) Suppose that $T_{a_i}$ in~(\ref{1140-pm-523}) interchanges~$\alpha$ and $\beta$, that is,
\begin{gather*}
T_{a_i}(\cdots \alpha\beta\cdots) = (\cdots \beta\alpha \cdots).
\end{gather*}
For each $T_{a_i}$ we def\/ine a $2^N \times 2^N$ matrix denoted by $\mathbf{T}_{a_i} =\mathbf{T}_{a_i}(\alpha,\beta) $ by the tensor product of $N-2$ identity matrices and $\mathbf{S}_{\beta\alpha}$ as follows.
\begin{Definition}\label{622-am}
Let $\mathbf{S}_{\beta\alpha}$ be given by (\ref{546pm511}). We def\/ine
\begin{gather*}
\mathbf{T}_{l}=\mathbf{T}_{l}({\alpha},{\beta}):= \mathbf{I}_2^{\otimes(l-1)} \otimes \mathbf{S}_{\beta\alpha} \otimes \mathbf{I}_2^{\otimes(N-l-1)} 
\end{gather*}
for $l=1,\dots,N-1$.
\end{Definition}

The matrices $\mathbf{T}_1,\dots, \mathbf{T}_{N-1}$ satisfy the matrix version of the relations (\ref{1141-pm-523}),
\begin{alignat}{3}
& \textrm{(i)} \quad && \mathbf{T}_i(\alpha,\beta)\mathbf{T}_j(\gamma,\delta) = \mathbf{T}_j(\gamma,\delta)\mathbf{T}_i(\alpha,\beta) \qquad \textrm{if} \quad |i - j| \geq 2,& \label{122-am-526}\\
& \textrm{(ii)} \quad &&\mathbf{T}_i(\beta,\gamma)\mathbf{T}_j(\alpha,\gamma)\mathbf{T}_i(\alpha,\beta) = \mathbf{T}_j(\alpha,\beta) \mathbf{T}_i(\alpha,\gamma)\mathbf{T}_j(\beta,\gamma) \qquad \textrm{if} \quad |i-j| = 1,& \label{123-am-526}\\
& \textrm{(iii)}\quad && \mathbf{T}_i(\beta,\alpha)\mathbf{T}_i(\alpha,\beta) = \mathbf{I}_{2^N}.&\label{124-am-526}
\end{alignat}
Here, (ii) and (iii) generalize (\ref{134-am-529}) and (\ref{135-am-529}), respectively.
\begin{Definition}
For given $\sigma=T_{a_n}\cdots T_{a_1}\in S_N$ we def\/ine
\begin{gather}
\mathbf{A}_{\sigma}:= \mathbf{T}_{a_n}\cdots \mathbf{T}_{a_1}. \label{148am511}
\end{gather}
Here, $\sigma$ may have a dif\/ferent representation $\sigma =T_{b_m}\cdots T_{b_1}$ but both $\mathbf{T}_{a_n}\cdots \mathbf{T}_{a_1}$ and $\mathbf{T}_{b_m}\cdots \mathbf{T}_{b_1}$ def\/ine the same matrix because of (\ref{122-am-526}), (\ref{123-am-526}) and~(\ref{124-am-526}). Thus, (\ref{148am511}) is well-def\/ined.
\end{Definition}

\begin{Lemma}If $\mathbf{A}_{\sigma}$ is given by \eqref{148am511}, then
\eqref{234-pm-523} holds. Hence \eqref{1032-31} satisfies \eqref{1038-31} for all $(x_1,\dots,x_N) \in \mathbb{Z}^N$.
\end{Lemma}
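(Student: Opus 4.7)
The plan is to reduce the statement directly to the definition \eqref{148am511}, with only a small bookkeeping check on the labels carried by the $\mathbf{T}$ matrices. Write $\sigma = T_{a_n}\cdots T_{a_1}$, so by \eqref{148am511} we have $\mathbf{A}_{\sigma} = \mathbf{T}_{a_n}\cdots \mathbf{T}_{a_1}$. Then $T_i\sigma = T_i T_{a_n}\cdots T_{a_1}$ is a decomposition of $T_i\sigma$ into simple transpositions, and applying \eqref{148am511} again gives
\begin{gather*}
\mathbf{A}_{T_i\sigma} = \mathbf{T}_i(\alpha',\beta')\,\mathbf{T}_{a_n}\cdots \mathbf{T}_{a_1} = \mathbf{T}_i(\alpha',\beta')\,\mathbf{A}_{\sigma}
\end{gather*}
for whatever label $(\alpha',\beta')$ is attached to the leading factor. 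The reduced-word ambiguity is absorbed by the relations \eqref{122-am-526}--\eqref{124-am-526}, so we may freely use this particular decomposition even if it is not of minimal length.

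Next I would pin down the label. By the convention introduced just before Definition~\ref{622-am}, the label on the $j$-th letter $T_{a_j}$ in the word is determined by the state of the one-line notation after all letters to its right have already acted, i.e.\ by the pair of entries sitting in positions $a_j, a_j+1$ at that moment. In our word the letters to the right of the leading $T_i$ are exactly $T_{a_n}\cdots T_{a_1}$, which turn the identity sequence into $(\sigma(1),\ldots,\sigma(N))$. Hence $(\alpha',\beta') = (\sigma(i),\sigma(i+1)) = (\alpha,\beta)$, and unpacking $\mathbf{T}_i(\alpha,\beta)$ via Definition~\ref{622-am} identifies $\mathbf{T}_i(\alpha,\beta)\mathbf{A}_\sigma$ with the right-hand side of \eqref{234-pm-523}.

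For the \emph{Hence} clause I would appeal to the derivation that was already carried out in the two- and three-particle sections: substituting the ansatz \eqref{1032-31} into the boundary condition \eqref{1038-31} and equating coefficients of $\prod \xi_{\sigma(i)}^{x_i}$, in exact analogy with the passage from \eqref{1118-225} to \eqref{316am59}--\eqref{317am59}, reduces \eqref{1038-31} to precisely the system of recursions \eqref{234-pm-523}. Having verified these recursions, \eqref{1032-31} satisfies \eqref{1038-31} on $\mathbb{Z}^N$.

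The only substantive obstacle is the label-tracking in the middle step: one has to keep straight that the subscript $a_j$ of $\mathbf{T}_{a_j}$ refers to a \emph{position} in the one-line notation, while its label refers to the pair of \emph{values} occupying that position at the moment $T_{a_j}$ acts. Once that convention is stated cleanly, appending $T_i$ on the left of $T_{a_n}\cdots T_{a_1}$ plainly contributes the factor $\mathbf{T}_i(\sigma(i),\sigma(i+1))$, and the lemma follows from the definition with no further computation.
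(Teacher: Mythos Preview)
Your argument is correct and is essentially the same as the paper's: both write $\sigma = T_{a_n}\cdots T_{a_1}$, prepend $T_i$, and read off $\mathbf{A}_{T_i\sigma} = \mathbf{T}_i\mathbf{A}_\sigma$ with the label $(\alpha,\beta)=(\sigma(i),\sigma(i+1))$ determined by Definition~\ref{622-am}. Your version is slightly more explicit about the label-tracking and the well-definedness via \eqref{122-am-526}--\eqref{124-am-526}, but the content is identical.
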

\begin{proof}
Let $\sigma = T_{a_n}\cdots T_{a_1}$ and $\mathbf{A}_{\sigma} = \mathbf{T}_{a_n}\cdots \mathbf{T}_{a_1}$. Suppose that $\sigma(i)= \alpha$ and $\sigma(i+1)=\beta$. Since $T_i\sigma = T_iT_{a_n}\cdots T_{a_1}$ and
\begin{gather*}
\mathbf{T}_{i}= \mathbf{I}_{2^{(i-1)}} \otimes \mathbf{S}_{\beta\alpha} \otimes \mathbf{I}_{2^{(N-i-1)}},
\end{gather*}
we immediately obtain
\begin{gather*}
\mathbf{A}_{T_i\sigma} = \mathbf{T}_{i}\mathbf{T}_{a_n}\cdots \mathbf{T}_{a_1}= \mathbf{T}_{i}\mathbf{A}_{\sigma} =\big(\mathbf{I}_{2^{(i-1)}} \otimes \mathbf{S}_{\beta\alpha} \otimes \mathbf{I}_{2^{(N-i-1)}}\big)\mathbf{A}_{\sigma}.\tag*{\qed}
\end{gather*}\renewcommand{\qed}{}
\end{proof}

Finally, if we integrate (\ref{1032-31}) over circles centered at the origin with radii less than 1, we obtain
\begin{gather}\label{1223-am-519}
\mathbf{P}_Y(X;t) = \dashint_C\cdots \dashint_C\sum_{\sigma\in {S}_N}\mathbf{A}_{\sigma}\prod_{i=1}^N\big(\xi_{\sigma(i)}^{x_i-y_{\sigma(i)}-1}e^{\varepsilon(\xi_i) t}\big) {\rm d}\xi_1\cdots {\rm d}\xi_N.
\end{gather}
The proof of
\begin{gather*}
\mathbf{P}_Y(X;0) = \begin{cases} \mathbf{I}_{2^N}& \textrm{if}~X=Y,\\
\mathbf{0}& \textrm{if}~X\neq Y
\end{cases}
\end{gather*}
is given in \cite{Tracy-Widom-2013}.

\section{Proofs of the main results}\label{section3}
\subsection{Proof of Theorem \ref{142-am-518}}

To prove Theorem \ref{142-am-518}, we need to sum $P_{(Y,21\dots 1)}(X,21\dots 1;t)$ over all allowed conf\/igurations, that is, we need to compute
\begin{gather}
\mathbb{P}_{(Y,21\dots 1)}(E_t) = \sum_{x=x_1<\cdots<x_N} P_{(Y,21\dots 1)}(X,21\dots 1;t).\label{1219-am-520}
\end{gather}
Putting $x_1=x,x_2=x+i_1,\dots,x_N = x+i_1+\cdots +i_{N-1}$, (\ref{1219-am-520}) becomes
\begin{gather*}
\sum_{i_1,\dots,i_{N-1}=1}^{\infty} \dashint_C\cdots \dashint_C\sum_{\sigma\in S_N}\big[\mathbf{A}_{\sigma}\big]\big(\xi_{\sigma(2)}\cdots\xi_{\sigma(N)}\big)^{i_1}\big(\xi_{\sigma(3)}\cdots\xi_{\sigma(N)}\big)^{i_2}\cdots \big(\xi_{\sigma(N)}\big)^{i_N} \nonumber \\
\qquad\quad{} \times \prod_{i=1}^N\big(\xi_{i}^{x-y_{i}-1}e^{\varepsilon(\xi_i) t}\big) {\rm d}\xi_1\cdots {\rm d}\xi_N \\
\qquad{} = \dashint_C\cdots \dashint_C\sum_{\sigma\in S_N}\big[\mathbf{A}_{\sigma}\big]\frac{\xi_{\sigma(2)}\xi_{\sigma(3)}^{2}\cdots \xi_{\sigma_{N}}^{N-1}}{(1-\xi_{\sigma(2)}\cdots\xi_{\sigma(N)})(1-\xi_{\sigma(3)}\cdots\xi_{\sigma(N)})\cdots (1-\xi_{\sigma(N)})}\\
\qquad\quad{} \times \prod_{i=1}^N\big(\xi_{i}^{x-y_{i}-1}e^{\varepsilon(\xi_i) t}\big) {\rm d}\xi_1\cdots {\rm d}\xi_N,
\end{gather*}
where $[\mathbf{A}_{\sigma}]$ is the $(2^{N-1}+1,2^{N-1}+1)^\textrm{th}$ element of $\mathbf{A}_{\sigma}$. (We write $[\mathbf{A}]_{i,j}$ for the $(i,j)^{\textrm{th}}$ element of matrix $\textbf{A}$ but we will simply write $[\mathbf{A}]$ for $i,j=2^{N-1}+1$.) In order to compute
\begin{gather}\label{1221-am-520}
 \sum_{\sigma\in S_N}[\mathbf{A}_{\sigma}]\frac{\xi_{\sigma(2)}\xi_{\sigma(3)}^{2}\cdots \xi_{\sigma_{N}}^{N-1}}{(1-\xi_{\sigma(2)}\cdots\xi_{\sigma(N)})(1-\xi_{\sigma(3)}\cdots\xi_{\sigma(N)})\cdots (1-\xi_{\sigma(N)})},
\end{gather}
f\/irst, we will f\/ind an expression for $[\mathbf{A}_{\sigma}]$.
\begin{Lemma}\label{420pm512}
Let $\mathbf{T}_{l}=\mathbf{T}_{l}(\alpha,\beta)$ be a $2^N \times 2^N$ matrix in Definition~{\rm \ref{622-am}} and let $\mathbf{A}_{\sigma}$ be given by~\eqref{148am511}.
\begin{itemize}\itemsep=0pt
\item [$(a)$] If $l\neq 1$, then $[\mathbf{T}_{l}(\alpha,\beta)] =S_{\beta\alpha},$ and if $l=1$, then $[\mathbf{T}_{l}(\alpha,\beta)] =-1$.
\item [$(b)$] Let $(a_1,\dots,a_{2^N})$ be the $(2^{N-1}+1)^{\textrm{th}}$ row vector of $\mathbf{T}_{l}$. Then, $a_k=0$ for all $k \neq 2^{N-1}+1$.
\item [$(c)$] $\mathbf{T}_{l}$ is an upper-triangular matrix.
\item [$(d)$] The diagonal terms of $\mathbf{A}_{\sigma}$ are given by
\begin{gather*}
[\mathbf{A}_{\sigma}]_{l,l} = \begin{cases}
[\mathbf{T}_{a_n}]_{l,l}\cdots[\mathbf{T}_{a_2}]_{l,l} [\mathbf{T}_{a_1}]_{l,l} & \textrm{if}~\sigma \neq 1, \\
1 & \textrm{if}~\sigma = 1.
\end{cases}
\end{gather*}
\end{itemize}
\end{Lemma}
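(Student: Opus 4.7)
The plan is to fix a consistent indexing of the rows and columns of every $2^{N}\times 2^{N}$ matrix by sequences $\pi\in\{1,2\}^{N}$ in the order displayed in (\ref{424am58}); under this ordering the position of $\pi$ is $1+\sum_{i=1}^{N}(\pi_{i}-1)2^{N-i}$, so that $\pi=21\cdots 1$ sits at row/column $2^{N-1}+1$. Crucially, this indexing is compatible with the Kronecker product: the $((\pi_{1}\cdots\pi_{N}),(\pi_{1}'\cdots\pi_{N}'))$-entry of $\mathbf{T}_{l}=\mathbf{I}_{2}^{\otimes(l-1)}\otimes\mathbf{S}_{\beta\alpha}\otimes\mathbf{I}_{2}^{\otimes(N-l-1)}$ factors as
\begin{gather*}
[\mathbf{T}_{l}]_{\pi,\pi'}=[\mathbf{S}_{\beta\alpha}]_{(\pi_{l}\pi_{l+1}),(\pi_{l}'\pi_{l+1}')}\prod_{i\neq l,l+1}\delta_{\pi_{i},\pi_{i}'}.
\end{gather*}

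With this in hand, parts (a) and (b) follow at once by specializing to $\pi=21\cdots 1$. The Kronecker delta factors force $\pi_{i}'=\pi_{i}$ for every $i\notin\{l,l+1\}$, so the $(2^{N-1}+1)$-st row of $\mathbf{T}_{l}$ is determined by a single row of $\mathbf{S}_{\beta\alpha}$. When $l=1$ we have $(\pi_{1},\pi_{2})=(2,1)$, and the ``$21$''-row of $\mathbf{S}_{\beta\alpha}$ (the third row in (\ref{546pm511})) is $(0,0,-1,0)$, so the only nonzero entry sits at column $\pi'=21\cdots 1$ with value $-1$. When $l\geq 2$ we have $(\pi_{l},\pi_{l+1})=(1,1)$, and the ``$11$''-row of $\mathbf{S}_{\beta\alpha}$ is $(S_{\beta\alpha},0,0,0)$, so again the only nonzero entry sits at the diagonal column with value $S_{\beta\alpha}$. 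This establishes simultaneously (a) and (b).

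For (c) I would first observe that $\mathbf{S}_{\beta\alpha}$ itself is upper-triangular in the ordering $11<12<21<22$: its sole off-diagonal nonzero entry is at row $12$, column $21$, which lies strictly above the diagonal. Since the identity is trivially upper-triangular, and since the Kronecker product of two upper-triangular matrices (with the lex-induced ordering on index pairs) is upper-triangular—because $(A\otimes B)_{(i,k),(j,l)}=A_{ij}B_{kl}$ vanishes whenever $i>j$ or $i=j$ with $k>l$—an easy induction shows that $\mathbf{T}_{l}$ is upper-triangular for every $l$. Part (d) is then immediate from the elementary fact that the diagonal entries of a product of upper-triangular matrices equal the products of the corresponding diagonal entries: $[\mathbf{T}_{a_{n}}\cdots\mathbf{T}_{a_{1}}]_{l,l}=[\mathbf{T}_{a_{n}}]_{l,l}\cdots[\mathbf{T}_{a_{1}}]_{l,l}$. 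The case $\sigma=1$ is the empty product, so $\mathbf{A}_{1}=\mathbf{I}_{2^{N}}$ by convention, giving $1$ on the diagonal.

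The only genuine obstacle is bookkeeping: one has to align the lex-to-integer correspondence of sequences with the block structure produced by iterated Kronecker products, so that reading ``row $21$'' or ``row $11$'' of $\mathbf{S}_{\beta\alpha}$ really yields the claimed slice of $\mathbf{T}_{l}$. Once that alignment is in place, every item in the lemma reduces to a one-line identification of a tensor factor or a standard fact about upper-triangular matrices.
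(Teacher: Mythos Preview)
Your proposal is correct and is essentially the same approach as the paper's: the paper's own proof is the single sentence ``All these properties are easily verified by observing the forms of $\mathbf{T}_{l}$,'' and what you have written is exactly that verification made explicit. Your tensor-indexing formula $[\mathbf{T}_{l}]_{\pi,\pi'}=[\mathbf{S}_{\beta\alpha}]_{(\pi_{l}\pi_{l+1}),(\pi_{l}'\pi_{l+1}')}\prod_{i\neq l,l+1}\delta_{\pi_{i},\pi_{i}'}$ and the upper-triangularity of Kronecker products are precisely the ``observations'' the paper is gesturing at.
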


\begin{proof}
All these properties are easily verified by observing the forms of $\mathbf{T}_{l}$.
\end{proof}
\begin{Remark}
Unlike Lemma \ref{420pm512}(d), to the best of the author's knowledge, it is nontrivial to f\/ind the non-diagonal terms of $\mathbf{A}_{\sigma}$ without directly performing matrices multiplication $\mathbf{T}_{a_n}\cdots\mathbf{T}_{a_1}$. In general, this makes it inconvenient to f\/ind an explicit form of $P_{(Y,\nu)}(X,\pi;t)$ for $\pi\neq \nu$.
\end{Remark}

Now, we give an expression for $[\mathbf{A}_{\sigma}]$.
\begin{Lemma}\label{146-am-520} If $N \geq 2$,
\begin{gather}\label{336-pm-524}
[\mathbf{A}_{\sigma}] =\operatorname{sgn}(\sigma)\prod_{i=0}^{N-2}\left(\frac{1-\xi_{2+i}}{1-\xi_{\sigma(2+i)}} \right)^i.
\end{gather}
\end{Lemma}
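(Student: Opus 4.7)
The plan is to prove (\ref{336-pm-524}) by induction on the length of $\sigma$, exploiting Lemma \ref{420pm512}(b) to reduce matrix products of the $\mathbf{T}_l$'s to scalar products at the distinguished $(2^{N-1}+1,2^{N-1}+1)$ entry. Because the $(2^{N-1}+1)$-th row of every $\mathbf{T}_l$ has a single nonzero entry (on the diagonal), we obtain $[\mathbf{T}_l\mathbf{M}] = [\mathbf{T}_l]\cdot[\mathbf{M}]$ for any $2^N\times 2^N$ matrix $\mathbf{M}$. Iterating with $\mathbf{A}_\sigma = \mathbf{T}_{a_n}\cdots\mathbf{T}_{a_1}$ immediately gives the scalar recursion
\[
[\mathbf{A}_{T_i\sigma}] \;=\; [\mathbf{T}_i(\sigma(i),\sigma(i+1))]\cdot[\mathbf{A}_\sigma], \qquad 1\le i\le N-1,\ \sigma\in S_N.
\]

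Write $f(\sigma)$ for the right-hand side of (\ref{336-pm-524}); since the $i=0$ factor is trivial, $f(\sigma) = \operatorname{sgn}(\sigma)\prod_{j=3}^{N}\bigl(\tfrac{1-\xi_j}{1-\xi_{\sigma(j)}}\bigr)^{j-2}$. The base case $\sigma = \mathrm{id}$ gives $f(\mathrm{id}) = 1 = [\mathbf{A}_{\mathrm{id}}]$. For the inductive step I verify that $f$ obeys the same recursion, namely $f(T_i\sigma) = [\mathbf{T}_i(\sigma(i),\sigma(i+1))]\cdot f(\sigma)$, by splitting on $i$. When $i=1$, $T_1$ affects only positions $1$ and $2$, which lie outside the range of the product, so only the sign flips, yielding $f(T_1\sigma) = -f(\sigma) = [\mathbf{T}_1]\cdot f(\sigma)$ by Lemma \ref{420pm512}(a). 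When $i=2$, only the $j=3$ factor changes and $(T_2\sigma)(3) = \sigma(2)$, giving $f(T_2\sigma)/f(\sigma) = -\tfrac{1-\xi_{\sigma(3)}}{1-\xi_{\sigma(2)}} = S_{\sigma(3),\sigma(2)}$. When $i\ge 3$, both the $j=i$ factor (exponent $i-2$) and the $j=i+1$ factor (exponent $i-1$) are affected; after cancellation, the ratio reduces to $-\tfrac{1-\xi_{\sigma(i+1)}}{1-\xi_{\sigma(i)}}$, again equal to $[\mathbf{T}_i(\sigma(i),\sigma(i+1))]$ by Lemma \ref{420pm512}(a).

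The main computational obstacle is the exponent bookkeeping in the third case: the two affected factors contribute $(1-\xi_{\sigma(i)})^{i-2}(1-\xi_{\sigma(i+1)})^{i-1}$ to the denominator of $f(\sigma)$, whereas in $f(T_i\sigma)$ they contribute $(1-\xi_{\sigma(i+1)})^{i-2}(1-\xi_{\sigma(i)})^{i-1}$; one must check these collapse to a single factor of $\tfrac{1-\xi_{\sigma(i+1)}}{1-\xi_{\sigma(i)}}$ in the ratio. The sign $-1$ in each of the three cases arises solely from $\operatorname{sgn}(T_i\sigma) = -\operatorname{sgn}(\sigma)$. Together with the scalar recursion, this closes the induction and establishes (\ref{336-pm-524}).
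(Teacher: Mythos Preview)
Your proof is correct. Both the paper's argument and yours rest on the same preliminary fact (Lemma~\ref{420pm512}(b)/(d)) that the distinguished diagonal entry is multiplicative, $[\mathbf{A}_\sigma]=\prod_k[\mathbf{T}_{a_k}]$, but the inductions are organized differently. The paper inducts on $N$: given $\sigma\in S_{N+1}$, it strips off the letter $N{+}1$ via $\sigma=T_KT_{K+1}\cdots T_N\,\sigma'$ with $\sigma'\in S_N$, and then applies the hypothesis to the $N$-particle quantity $[\mathbf{A}_{\sigma'}]$ (implicitly using that $\mathbf{T}_l^{(N+1)}=\mathbf{T}_l^{(N)}\otimes\mathbf{I}_2$ for $l\le N-1$, so the relevant diagonal entries coincide). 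You instead stay inside a fixed $S_N$ and induct on word length, checking directly that the right-hand side $f(\sigma)$ of~(\ref{336-pm-524}) satisfies the same one-step recursion $f(T_i\sigma)=[\mathbf{T}_i(\sigma(i),\sigma(i+1))]\,f(\sigma)$ via the three cases $i=1$, $i=2$, $i\ge 3$. Your route avoids comparing systems of different sizes and is slightly more elementary; the paper's route packages the same cancellation into a single telescoping step coming from the canonical factorization of $\sigma$ through $S_N\hookrightarrow S_{N+1}$.
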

\begin{proof}
If $\sigma = 1$, the statement is trivial. Suppose that $\sigma \neq 1$. We prove the statement by induction on $N$. When $N=2$, it is easy to verify the statement if we observe (\ref{102224}). Let $\sigma' \in S_N$ and suppose that (\ref{336-pm-524}) holds for $N$, that is,
\begin{gather*}
[\mathbf{A}_{\sigma'}] =
\operatorname{sgn}(\sigma')\left(\frac{1-\xi_{3}}{1-\xi_{\sigma'(3)}} \right)\cdots\left(\frac{1-\xi_{N-1}}{1-\xi_{\sigma'(N-1)}} \right)^{N-3}\left(\frac{1-\xi_{N}}{1-\xi_{\sigma'(N)}} \right)^{N-2}
\end{gather*}
holds. For any $\sigma \in S_{N+1}$, there are $\sigma' \in S_{N}$ and an integer $K= \{1,\dots, N+1\}$ such that $\sigma(i) = \sigma'(i)$ for $1\leq i\leq K-1$, $\sigma(K) = N+1$ and $\sigma(i) = \sigma'(i-1)$ for $K+1 \leq i \leq N+1$. Suppose that $\sigma' = T_{a_n}\cdots T_{a_1}$ for some f\/inite sequence $a_1,\dots,a_n$ taking values in $\{1,\dots, N-1\}$. If we view $T_{a_n},\dots, T_{a_1}$ as simple transpositions in $S_{N+1}$, then
\begin{gather*}
\sigma = T_KT_{K+1}\cdots T_NT_{a_n}\cdots T_{a_1}\big(1~2\cdots N~(N+1)\big)
\end{gather*}
and
\begin{gather*}
[\mathbf{A}_{\sigma}]_{2^N+1,2^N+1}= [\mathbf{T}_K]_{2^N+1,2^N+1} \cdots[\mathbf{T}_N]_{2^N+1,2^N+1} [\mathbf{T}_{a_n}\cdots\mathbf{T}_{a_1}]_{2^N+1,2^N+1} \\
\hphantom{[\mathbf{A}_{\sigma}]_{2^N+1,2^N+1}}{} = [\mathbf{T}_K]_{2^N+1,2^N+1} \cdots[\mathbf{T}_N]_{2^N+1,2^N+1} [\mathbf{A}_{\sigma'}]
\end{gather*}
by Lemma \ref{420pm512}(d). Using the induction hypothesis and Lemma \ref{420pm512}(a), we obtain
\begin{gather}
[\mathbf{A}_{\sigma}]_{2^N+1,2^N+1}= \left(-\frac{1-\xi_{N+1}}{1-\xi_{\sigma'(K)}}\right)\left(-\frac{1-\xi_{N+1}}{1-\xi_{\sigma'(K+1)}}\right)\cdots \left(-\frac{1-\xi_{N+1}}{1-\xi_{\sigma'(N)}}\right) \nonumber\\
\hphantom{[\mathbf{A}_{\sigma}]_{2^N+1,2^N+1}=}{} \times\operatorname{sgn}(\sigma')\left(\frac{1-\xi_{3}}{1-\xi_{\sigma'(3)}} \right)\cdots\left(\frac{1-\xi_{N-1}}{1-\xi_{\sigma'(N-1)}} \right)^{N-3}\left(\frac{1-\xi_{N}}{1-\xi_{\sigma'(N)}} \right)^{N-2}.\label{336-am-526}
\end{gather}
If we recall that $\sigma(i) = \sigma'(i)$ for $1\leq i\leq K-1$, $\sigma(K) = N+1$ and $\sigma(i) = \sigma'(i-1)$ for $K+1 \leq i \leq N+1$ and note that $(-1)^{N-K+1}\operatorname{sgn}(\sigma')=\operatorname{sgn}(\sigma)$, then we see that~(\ref{336-am-526}) is equal to
\begin{gather*}
\operatorname{sgn}(\sigma)\left(\frac{1-\xi_{3}}{1-\xi_{\sigma(3)}}\right)\left(\frac{1-\xi_{4}}{1-\xi_{\sigma(4)}}\right)^2\cdots
\left(\frac{1-\xi_{N+1}}{1-\xi_{\sigma(N+1)}}\right)^{N-1}.\tag*{\qed}
\end{gather*}\renewcommand{\qed}{}
\end{proof}

Now, let us come back to the expression (\ref{1221-am-520}). The sum in (\ref{1221-am-520}) with $[\mathbf{A}_{\sigma}]$ in (\ref{336-pm-524}) will be simplif\/ied by the algebraic identity
\begin{gather}
 \sum_{\sigma\in S_N}[\mathbf{A}_{\sigma}]\frac{\xi_{\sigma(2)}\xi_{\sigma(3)}^{2}\cdots \xi_{\sigma_{N}}^{N-1}}{(1-\xi_{\sigma(2)}\cdots\xi_{\sigma(N)})(1-\xi_{\sigma(3)}\cdots\xi_{\sigma(N)})\cdots (1-\xi_{\sigma(N-1)}\xi_{\sigma(N)})(1-\xi_{\sigma(N)})}\nonumber\\
\qquad = (1-\xi_1)\prod_{1\leq i<j\leq N}\frac{\xi_j-\xi_i}{1-\xi_i}\prod_{i=1}^N\frac{1}{1-\xi_i},\qquad N\geq 2.\label{1221-am-525}
\end{gather}
Once this identity is proved, the proof of Theorem \ref{142-am-518} is completed.
\begin{Remark}
Although this paper does not deal with the ASEP with second class particles, it is believed that the generalization of the identity (\ref{1221-am-525}) to the ASEP with second class particles is possible. This generalization was conf\/irmed for small systems by the author. Since $P_{(Y,\nu)}(X,\nu;t)$ in the ASEP with second class particles is not determinantal, it seems that the determinantal structure of $P_{(Y,\nu)}(X,\nu;t)$ in the TASEP with second class particles is not essential in deriving~(\ref{1221-am-525}). But, we do not have a result corresponding to Lemma~\ref{146-am-520} for the ASEP with second class particles.
\end{Remark}

If we observe that
\begin{gather*}
\frac{1-\xi_1}{\prod\limits_{i=0}^{N-2}(1-\xi_{2+i})^{i}}\frac{1}{\prod\limits_{i<j}(1-\xi_i)} = \prod_{i=1}^N\frac{1}{(1-\xi_i)^{N-2}},
\end{gather*}
then we see that (\ref{1221-am-525}) is equivalent to
\begin{gather}
\sum_{\sigma\in S_N}\operatorname{sgn}(\sigma)\frac{1}{(1-\xi_{\sigma(3)})(1-\xi_{\sigma(4)})^2\cdots(1-\xi_{\sigma(N)})^{N-2}}\nonumber\\
\qquad\quad{} \times \frac{\xi_{\sigma(2)}\xi_{\sigma(3)}^2\xi_{\sigma(4)}^3\cdots \xi_{\sigma(N)}^{N-1}}{(1-\xi_{\sigma(2)}\cdots\xi_{\sigma(N)})(1-\xi_{\sigma(3)}\cdots\xi_{\sigma(N)})\cdots (1-\xi_{\sigma(N-1)}\xi_{\sigma(N)})(1-\xi_{\sigma(N)})} \nonumber\\
 \qquad {} =\prod_{i=1}^N\frac{1}{(1-\xi_i)^{N-1}}\prod_{1\leq i<j\leq N}(\xi_j-\xi_i), \qquad N \geq 2.\label{301-pm-520}
\end{gather}
Also, if we substitute $1/\xi_{N-i+1}$ for $\xi_i $ in (\ref{301-pm-520}), we obtain another equivalent identity
\begin{gather}
\sum_{\sigma\in S_N}\operatorname{sgn}(\sigma)\frac{1}{(\xi_{\sigma(1)}-1)^{N-2}(\xi_{\sigma(2)}-1)^{N-3}\cdots(\xi_{\sigma(N-2)}-1)}\nonumber\\
\qquad\quad{} \times \frac{\xi_{\sigma(N-2)}\xi_{\sigma(N-3)}^2\cdots \xi_{\sigma(1)}^{N-2}}{(\xi_{\sigma(1)}\cdots\xi_{\sigma(N-1)}-1)(\xi_{\sigma(1)}\cdots\xi_{\sigma(N-2)}-1)\cdots (\xi_{\sigma(1)}\xi_{\sigma(2)}-1)(\xi_{\sigma(1)}-1)}\nonumber \\
\qquad{} =\prod_{i=1}^N\frac{1}{(\xi_i-1)^{N-1}}\prod_{1\leq i<j\leq N}(\xi_j-\xi_i).\label{530-pm-520}
\end{gather}
We will prove the identity (\ref{530-pm-520}). In order to prove (\ref{530-pm-520}), we will use an identity for the TASEP. In (3.2) with $p=1$ in \cite{Tracy-Widom-2008}, $A_{\sigma}$ can be written as
\begin{gather*}
A_{\sigma}=\operatorname{sgn}(\sigma)\left(\frac{1-\xi_2}{1-\xi_{\sigma(2)}} \right)\left(\frac{1-\xi_3}{1-\xi_{\sigma(3)}} \right)^2\cdots\left(\frac{1-\xi_N}{1-\xi_{\sigma(N)}} \right)^{N-1}.
\end{gather*}
Using (1.6) with $p=1$ in \cite{Tracy-Widom-2008}, we have
\begin{gather*}
 \sum_{\sigma\in S_N}A_{\sigma}\frac{\xi_{\sigma(2)}\xi_{\sigma(3)}^2\xi_{\sigma(4)}^3\cdots \xi_{\sigma(N)}^{N-1}}{(1-\xi_{\sigma(2)}\cdots\xi_{\sigma(N)})(1-\xi_{\sigma(3)}\cdots\xi_{\sigma(N)})\cdots (1-\xi_{\sigma(N-1)}\xi_{\sigma(N)})(1-\xi_{\sigma(N)})} \nonumber\\
 \qquad{} =(1-\xi_1\cdots\xi_N)\prod_{i=1}^N\frac{1}{1-\xi_i}\prod_{1\leq i<j\leq N}\frac{\xi_j-\xi_i}{1-\xi_i},\qquad N\geq 2 ,
\end{gather*}
equivalently,
\begin{gather}
\sum_{\sigma\in S_N}\operatorname{sgn}(\sigma)\frac{1}{(1-\xi_{\sigma(2)})(1-\xi_{\sigma(3)})^2\cdots(1-\xi_{\sigma(N)})^{N-1}}\nonumber\\
\qquad\quad{} \times \frac{\xi_{\sigma(2)}\xi_{\sigma(3)}^2\xi_{\sigma(4)}^3\cdots \xi_{\sigma(N)}^{N-1}}{(1-\xi_{\sigma(2)}\cdots\xi_{\sigma(N)})(1-\xi_{\sigma(3)}\cdots\xi_{\sigma(N)})\cdots (1-\xi_{\sigma(N-1)}\xi_{\sigma(N)})(1-\xi_{\sigma(N)})}\nonumber \\
 \qquad{} = (1-\xi_1\cdots\xi_N)\prod_{i=1}^N\frac{1}{(1-\xi_i)^{N}}\prod_{1\leq i<j\leq N}(\xi_j-\xi_i),\qquad N\geq 2 .\label{308-am-521}
\end{gather}
If we substitute $1/\xi_{N-i+1}$ for $\xi_i $ in (\ref{308-am-521}), we obtain an equivalent version of (\ref{308-am-521}),
\begin{gather}
\sum_{\sigma\in S_N}\operatorname{sgn}(\sigma)\frac{1}{(\xi_{\sigma(1)}-1)^{N-1}(\xi_{\sigma(2)}-1)^{N-2}\cdots(\xi_{\sigma(N-1)}-1)}\nonumber\\
\qquad\quad{} \times \frac{\xi_{\sigma(N-1)}\xi_{\sigma(N-2)}^2\cdots \xi_{\sigma(1)}^{N-1}}{(\xi_{\sigma(1)}\cdots\xi_{\sigma(N-1)}-1)(\xi_{\sigma(1)}\cdots\xi_{\sigma(N-2)}-1)\cdots (\xi_{\sigma(1)}\xi_{\sigma(2)}-1)(\xi_{\sigma(1)}-1)} \nonumber\\
 \qquad{} = (\xi_1\cdots \xi_N-1)\prod_{i=1}^N\frac{1}{(\xi_i-1)^{N}}\prod_{1\leq i<j\leq N}(\xi_j-\xi_i).\label{631-pm-520}
\end{gather}
This is the identity we will use in order to prove (\ref{530-pm-520}). Also, we need the following result to prove (\ref{530-pm-520}).
\begin{Lemma}[Vandermonde determinants]\label{827-am-521}
\begin{gather}
\sum_{\alpha=1}^N(-1)^{N+\alpha}(\xi_{\alpha}-1)^{N-1}\prod_{\substack{1\leq i < j \leq N\\ i,j \neq \alpha}}(\xi_j - \xi_i) = \prod_{1\leq i<j\leq N}(\xi_j - \xi_i). \label{855-am-521}
\end{gather}
\end{Lemma}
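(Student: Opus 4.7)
The plan is to recognize the right-hand side as a Vandermonde determinant and obtain the left-hand side by a single column operation followed by Laplace expansion along the last column.

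First I would write
\[
\prod_{1\leq i<j\leq N}(\xi_j-\xi_i) = \det\bigl[\xi_i^{j-1}\bigr]_{1\leq i,j\leq N}.
\]
Since $(\xi_i-1)^{N-1}=\sum_{k=0}^{N-1}\binom{N-1}{k}(-1)^{N-1-k}\xi_i^k$, replacing $\xi_i^{N-1}$ by $(\xi_i-1)^{N-1}$ in the last column amounts to adding a fixed linear combination of the first $N-1$ columns to column $N$, which does not change the determinant. Hence
\[
\prod_{1\leq i<j\leq N}(\xi_j-\xi_i) = \det\bigl[c_{i,j}\bigr]_{1\leq i,j\leq N},\qquad c_{i,j}=\begin{cases}\xi_i^{j-1} & j<N,\\ (\xi_i-1)^{N-1} & j=N.\end{cases}
\]

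Next I would apply Laplace expansion along the last column. Removing row $\alpha$ and column $N$ leaves the $(N-1)\times(N-1)$ matrix $\bigl[\xi_i^{j-1}\bigr]$ with $i\in\{1,\dots,N\}\setminus\{\alpha\}$ and $1\leq j\leq N-1$, whose determinant is the Vandermonde
\[
\prod_{\substack{1\leq i<j\leq N\\ i,j\neq\alpha}}(\xi_j-\xi_i).
\]
The cofactor sign for position $(\alpha,N)$ is $(-1)^{\alpha+N}$, so the expansion yields exactly
\[
\sum_{\alpha=1}^N(-1)^{N+\alpha}(\xi_{\alpha}-1)^{N-1}\prod_{\substack{1\leq i<j\leq N\\ i,j\neq\alpha}}(\xi_j-\xi_i),
\]
which is the left-hand side of \eqref{855-am-521}.

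There is essentially no hard step here; the identity is a two-line consequence of column operations and cofactor expansion on the Vandermonde determinant. The only thing to be careful about is the bookkeeping of the cofactor sign $(-1)^{\alpha+N}$ and the verification that adding the Newton-binomial combination of columns $1,\dots,N-1$ to column $N$ is a legitimate column operation (it is, since each added column carries a scalar coefficient independent of $i$).
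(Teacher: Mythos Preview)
Your proof is correct and is essentially the same argument as the paper's: the paper works with the transpose of your matrix (variables indexing columns, powers indexing rows), recognizes the left-hand side as a cofactor expansion along the last \emph{row}, and then performs row operations to reduce to the Vandermonde determinant. The only differences are the interchange of rows and columns and the direction of the argument, neither of which is substantive.
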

\begin{proof}
The left hand side of (\ref{855-am-521}) is a cofactor expansion of
\begin{gather*}
\det\left[
 \begin{matrix}
 1 &1 & \cdots & 1 \\
 \xi_1 & \xi_2 & \cdots & \xi_N \\
 \vdots & \vdots & & \vdots\\
 \xi_1^{N-2} & \xi_2^{N-2} & \cdots & \xi_N^{N-2} \\
 (\xi_1-1)^{N-1} & (\xi_2-1)^{N-1} & \cdots& (\xi_N-1)^{N-1}\\
 \end{matrix}
\right].
\end{gather*}
Note that
\begin{gather*}
(\xi_{\alpha}-1)^{N-1}= \sum_{k=1}^Na_{N-k}\xi_{\alpha}^{N-k}
\end{gather*}
for some constants $a_{N-1},\dots, a_{0}$. (Here, $a_{N-1}=1$.) If we add a multiple of the f\/irst row by the constant $-a_0$ to the $N$th row, the determinant does not change and is equal to
\begin{gather*}
\det\left[
 \begin{matrix}
 1 &1 & \cdots & 1 \\
 \xi_1 & \xi_2 & \cdots & \xi_N \\
 \vdots & \vdots & & \vdots\\
 \xi_1^{N-2} & \xi_2^{N-2} & \cdots & \xi_N^{N-2} \\
 \sum\limits_{k=1}^{N-1}a_{N-k}\xi_{1}^{N-k} & \sum\limits_{k=1}^{N-1}a_{N-k}\xi_{2}^{N-k} & \cdots& \sum\limits_{k=1}^{N-1}a_{N-k}\xi_{N}^{N-k}\\
 \end{matrix}
\right].
\end{gather*}
We successively perform these row operations to obtain
\begin{gather*}
\det\left[
 \begin{matrix}
 1 &1 & \cdots & 1 \\
 \xi_1 & \xi_2 & \cdots & \xi_N \\
 \vdots & \vdots & & \vdots\\
 \xi_1^{N-2} & \xi_2^{N-2} & \cdots & \xi_N^{N-2} \\
 \xi_{1}^{N-1} & \xi_{2}^{N-2} & \cdots& \xi_{N}^{N-1}\\
 \end{matrix}
\right],
\end{gather*}
which is the Vadermonde determinant.
\end{proof}

Now, we prove (\ref{530-pm-520}).
\begin{proof}[Proof of identity (\ref{530-pm-520})]
 When $N=2$, it is easy to show (\ref{530-pm-520}). We will show (\ref{530-pm-520}) for $N\geq 3$. The left hand side of (\ref{530-pm-520}) is an antisymmetric function of $\xi_1,\dots, \xi_{N}$ because the sum is an anti-symmetrized sum, so it is divisible by the Vandermonde determinant. Hence, the left hand side of (\ref{530-pm-520}) can be written as
\begin{gather*}
G(\xi_1,\dots, \xi_{N})\times \prod_{1\leq i<j\leq N}(\xi_j-\xi_i)
\end{gather*}
where $G(\xi_1,\dots, \xi_{N})$ is a symmetric function of $\xi_1,\dots, \xi_{N}$. So, we want to show that
\begin{gather*}
G(\xi_1,\dots, \xi_{N})=\prod_{i=1}^N\frac{1}{(\xi_i-1)^{N-1}}.
\end{gather*}
 Fix $\alpha \in \{1,\dots, N\}$. Let $\sigma'$ be a bijective mapping from $\{1,\dots, N-1\}$ onto $\{1,\dots, N\}\setminus \{\alpha\}$ and let $S'_{N-1}$ be the set of $\sigma'$. Let $\sigma_{\alpha}\in S_{N}$ be a permutation such that $\sigma_{\alpha}(N)=\alpha$ and $\sigma_{\alpha}(i) = \sigma'(i)$ for $i=1,\dots,N-1$. Then, the left hand side of (\ref{530-pm-520}) is equal to
\begin{gather}
 \sum_{\alpha=1}^N\sum_{\sigma_{\alpha}\in S_N}\operatorname{sgn}(\sigma_{\alpha})\frac{1}{(\xi_{\sigma_{\alpha}(1)}-1)^{N-2}(\xi_{\sigma_{\alpha}(2)}-1)^{N-3} \cdots(\xi_{\sigma_{\alpha}(N-2)}-1)}\nonumber\\
\qquad{} \times \frac{\xi_{\sigma_{\alpha}(N-2)}\xi_{\sigma_{\alpha}(N-3)}^2\cdots \xi_{\sigma_{\alpha}(1)}^{N-2}}{(\xi_{\sigma_{\alpha}(1)}\cdots\xi_{\sigma_{\alpha}(N-1)}-1)(\xi_{\sigma_{\alpha}(1)}\cdots\xi_{\sigma_{\alpha}(N-2)}-1)\cdots (\xi_{\sigma_{\alpha}(1)}-1)}.\label{745-am-521}
\end{gather}
The sum over $\sigma_{\alpha} \in S_{N}$ for a f\/ixed $\alpha$ in (\ref{745-am-521}) is equal to
\begin{gather}
\sum_{\sigma' \in S'_{N-1}} (-1)^{N-\alpha}\operatorname{sgn}(\sigma')\frac{1}{(\xi_{\sigma'(1)}-1)^{N-2}(\xi_{\sigma'(2)}-1)^{N-3}\cdots(\xi_{\sigma'(N-2)}-1)}\nonumber\\
\qquad{} \times \frac{\xi_{\sigma'(N-2)}\xi_{\sigma'(N-3)}^2\cdots \xi_{\sigma'(1)}^{N-2}}{(\xi_{\sigma'(1)}\cdots\xi_{\sigma'(N-1)}-1)(\xi_{\sigma'(1)}\cdots\xi_{\sigma'(N-2)}-1)\cdots (\xi_{\sigma'(1)}-1)},\label{745-am-525}
\end{gather}
because $\sigma_{\alpha}(i) = \sigma'(i)$ for $i=1,\dots,N-1$ and $\operatorname{sgn}(\sigma_{\alpha}) = (-1)^{N-\alpha}\operatorname{sgn}(\sigma')$. Therefore,
if we apply (\ref{631-pm-520}) for $N-1$ to (\ref{745-am-525}) and sum over $\alpha$, we obtain
\begin{gather*}
\sum_{\alpha=1}^N(-1)^{N-\alpha} \prod_{\substack{i=1\\i\neq \alpha }}^{N-1}\frac{1}{(\xi_i-1)^{N-1}}\prod_{\substack{1\leq i<j\leq {N-1}\\ i,j\neq \alpha}}(\xi_j-\xi_i)= \prod_{i=1}^{N}\frac{1}{(\xi_i-1)^{N-1}}\prod_{1\leq i<j\leq {N}}(\xi_j-\xi_i)
\end{gather*}
by using Lemma \ref{827-am-521}.
\end{proof}

\subsection{Proof of Theorem \ref{144-am-518} and Corollary \ref{143-am-519}}
\begin{Lemma}\label{1123-pm-529}
Let $k_i$ be any integer such that $0 \leq k_i \leq i-2$ for $i=2,\dots,N$, and $l$ be any nonnegative integer. Let us consider
\begin{gather}\label{102-am-530}
 \det\left[
 \begin{matrix}
 \xi_1^{N-1+l} & \xi_1^{N-2 +k_2} & \xi_1^{N-3+k_3} & \xi_1^{N-4+k_4} & \cdots & \xi_1^{1+k_{N-1}} & \xi_1^{k_N} \\
 \xi_2^{N-1+l} & \xi_2^{N-2+k_2} & \xi_2^{N-3+k_3} & \xi_2^{N-4+k_4}& \cdots & \xi_2^{1+k_{N-1}} & \xi_2^{k_N} \\
 \vdots & \vdots & \vdots & \vdots & \vdots& \vdots & \vdots\\
 \xi_N^{N-1+l} & \xi_N^{N-2+k_2} & \xi_N^{N-3+k_3}& \xi_N^{N-4+k_4}& \cdots & \xi_N^{1+k_{N-1}}& \xi_N^{k_N} \\
 \end{matrix}
\right].
\end{gather}
\begin{itemize}\itemsep=0pt
\item [$(a)$] If there is a nonzero $k_i$ for some $i=3,\dots,N$, the determinant is zero.
\item [$(b)$] If $k_i = 0$ for all $i$, the determinant is
\begin{gather*}
h_l(\xi_1,\dots,\xi_N)\prod_{1\leq i<j\leq N}(\xi_i-\xi_j).
\end{gather*}
\end{itemize}
\end{Lemma}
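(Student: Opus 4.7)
My plan is to treat the two cases of Lemma \ref{1123-pm-529} separately, both exploiting the elementary structure of the determinant \eqref{102-am-530} as a function of monomial exponents.

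For Part~(a), the strategy is to locate two columns with coinciding exponents, which immediately forces the determinant to vanish. I would set $i^{*} = \min\{i \in \{3,\dots,N\} : k_i > 0\}$ and $k = k_{i^{*}}$, so that $1 \leq k \leq i^{*} - 2$. The $i^{*}$-th column of \eqref{102-am-530} carries the exponent $N - i^{*} + k$. I would then consider the column indexed by $j = i^{*} - k$, which satisfies $2 \leq j \leq i^{*} - 1$. If $j = 2$ then $k_{j} = 0$ by hypothesis and the corresponding exponent is $N - 2 = N - i^{*} + k$; if $j \geq 3$ then $j < i^{*}$, so minimality of $i^{*}$ forces $k_{j} = 0$, and the exponent equals $N - j = N - i^{*} + k$. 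In either case, columns $i^{*}$ and $j$ of \eqref{102-am-530} are identical.

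For Part~(b), with all $k_{i} = 0$ the column exponents read $N-1+l,\ N-2,\ N-3,\ \dots,\ 1,\ 0$, so \eqref{102-am-530} is the bialternant $\det\bigl(\xi_{i}^{\lambda_{j} + N - j}\bigr)$ associated with the one-row partition $\lambda = (l,0,\dots,0)$. The Jacobi--Trudi/Weyl bialternant formula
\begin{gather*}
s_{\lambda}(\xi_{1},\dots,\xi_{N}) \prod_{1 \leq i < j \leq N} (\xi_{i} - \xi_{j}) = \det\bigl(\xi_{i}^{\lambda_{j} + N - j}\bigr),
\end{gather*}
together with the identity $s_{(l)} = h_{l}$, then gives the claimed factorization.

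I do not anticipate any serious obstacle: Part~(a) is a pure indexing argument, and Part~(b) is a direct application of the bialternant definition of Schur polynomials. If one prefers to avoid invoking symmetric-function theory, Part~(b) can alternatively be proved by observing that \eqref{102-am-530} is antisymmetric in $(\xi_{1},\dots,\xi_{N})$ and hence divisible by the Vandermonde $\prod_{i<j}(\xi_{i} - \xi_{j})$; the quotient is then a symmetric polynomial of total degree $l$ which can be pinned down to be $h_{l}$ by an elementary leading-term comparison.
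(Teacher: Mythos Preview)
Your proposal is correct and matches the paper's argument essentially line for line: for (a) you locate two columns with the same exponent by taking the minimal index $i^*$ with $k_{i^*}>0$ (the paper does the same, phrased as a contradiction), and for (b) you invoke the bialternant identity $\det(\xi_i^{\lambda_j+N-j})=s_\lambda\prod_{i<j}(\xi_i-\xi_j)$ with $\lambda=(l,0,\dots,0)$ and $s_{(l)}=h_l$, exactly as the paper does via the reference to Macdonald.
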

\begin{proof}
(a) Suppose that $k_{\alpha} \neq 0$ for some $\alpha \geq 3$ but the determinant is nonzero. Since the power of the terms in the second column is $(N-2)$ and the determinant should be nonzero, the power of the terms in the third column must be $(N-3).$ If we repeat this process, we see that the power of the terms in the $i^{\textrm{th}}$ column should be $(N-i)$, $i=2,\dots, \alpha-1$, for the determinant to be nonzero. If $k_{\alpha} \neq 0$, then the power of the terms in the ${\alpha}^{\textrm{th}}$ column is one of $N-\alpha+ k_{\alpha}$ where $k_{\alpha} = 1,2,\dots, \alpha -2$, and hence the determinant is zero, which is a contradiction.

(b) If $k_i = 0$ for all $i$, then we have
\begin{gather*}
 \det\left[
 \begin{matrix}
 \xi_1^{N-1+l} & \xi_1^{N-2} & \xi_1^{N-3} & \cdots & 1 \\
 \xi_2^{N-1+l} & \xi_2^{N-2} & \xi_2^{N-3} &\cdots & 1 \\
 \vdots & \vdots & \vdots & \vdots & \vdots\\
 \xi_N^{N-1+l} & \xi_N^{N-2} & \xi_N^{N-3}& \cdots & 1 \\
 \end{matrix}
\right] = \det\big[\xi_i^{\lambda_{j}+N-j}\big]_{i,j=1}^N = h_l\prod_{1\leq i<j\leq N}(\xi_i-\xi_j)
\end{gather*}
where $\lambda = (\lambda_1,\lambda_2,\dots,\lambda_N )= (l,0,\dots,0)$ is a partition and $h_l= h_l(\xi_1,\dots,\xi_N)$ is the complete symmetric polynomial of degree~$l$ \cite[Chapter I.3]{Macdonald}.
\end{proof}
Now, we apply the initial condition (\ref{107-am-518}) to (\ref{125-am-518}). Then, we have, after some manipulations,
\begin{gather*}\label{531-pm-521}
\mathbb{P}_{(Y,21\dots 1)}(E_t) = \dashint_C\cdots\dashint_C\prod_{1\leq i<j\leq N}(\xi_j-\xi_i)\prod_{i=1}^N\frac{1}{(1-\xi_i)^{N-1}}\prod_{i=1}^N \big(\xi_i^{x-N-1-l}e^{\varepsilon(\xi_i) t}\big)\\
\qquad\quad{} \times\left(\prod_{i=0}^{N-2}(1-\xi_{2+i})^i\right)\xi_{1}^{N+l-1}\xi_{2}^{N-2}\xi_{3}^{N-3}\cdots \xi_{N-1}{\rm d}\xi_1\cdots {\rm d}\xi_N \\
\qquad{} = \frac{1}{N!} \dashint_C\cdots\dashint_C\prod_{i<j}(\xi_j-\xi_i)\prod_{i=1}^N\frac{1}{(1-\xi_i)^{N-1}}\prod_{i=1}^N \big(\xi_i^{x-N-1-l}e^{\varepsilon(\xi_i) t}\big)\\
 \qquad\quad{} \times\left[\sum_{\sigma\in S_N}\operatorname{sgn}(\sigma)\left(\prod_{i=0}^{N-2}(1-\xi_{\sigma(2+i)})^i\right)\xi_{\sigma(1)}^{N+l-1}\xi_{\sigma(2)}^{N-2}\xi_{\sigma(3)}^{N-3}\cdots \xi_{\sigma(N-1)}\right]{\rm d}\xi_1\cdots {\rm d}\xi_N.
\end{gather*}
If we expand $\prod\limits_{i=0}^{N-2}(1-\xi_{\sigma(2+i)})^i$, then we see that each term is in the form of
\begin{gather*}
C\cdot\xi_{\sigma(3)}^{k_3}\xi_{\sigma(4)}^{k_4}\cdots\xi_{\sigma(N)}^{k_N}
\end{gather*}
where $k_i$ are some integers such that $0\leq k_i \leq i-2$ and $C$ is some constant, and
\begin{gather*}
\sum_{\sigma\in S_N}\operatorname{sgn}(\sigma)\xi_{\sigma(1)}^{N-1+l}\xi_{\sigma(2)}^{N-2}\xi_{\sigma(3)}^{N-3+k_3}\cdots \xi_{\sigma(N-1)}^{1+k_{N-1}}\xi_{\sigma(N)}^{k_N}
\end{gather*}
is the determinant (\ref{102-am-530}). Hence, by Lemma \ref{1123-pm-529}, we obtain
\begin{gather}
 \mathbb{P}_{(Y,21\dots 1)}(E_t) = \frac{(-1)^{N(N-1)/2}}{N!} \dashint_C\cdots\dashint_Ch_l(\xi_1,\dots,\xi_N)\prod_{1\leq i<j\leq N}(\xi_j-\xi_i)^2\prod_{i=1}^N\frac{1}{(\xi_i-1)^{N-1}}\nonumber\\
\hphantom{\mathbb{P}_{(Y,21\dots 1)}(E_t) =}{} \times\prod_{i=1}^N \big(\xi_i^{x-N-l-1}e^{\varepsilon(\xi_i) t}\big){\rm d}\xi_1\cdots {\rm d}\xi_N.\label{152-pm-84}
\end{gather}
Finally, we immediately obtain (\ref{530-pm-517}) if $l=0$ in (\ref{152-pm-84}).

\subsection*{Acknowledgements}
This work was supported by the social policy grant from Nazarbayev University. The author is grateful to the anonymous referees for valuables comments and suggestions.

\pdfbookmark[1]{References}{ref}
\LastPageEnding


\begin{thebibliography}{99}
\footnotesize\itemsep=0pt

\bibitem{Barraquand-Corwin-2016}
Barraquand G., Corwin I., The {$q$}-{H}ahn asymmetric exclusion process,
 \href{https://doi.org/10.1214/15-AAP1148}{\textit{Ann. Appl. Probab.}} \textbf{26} (2016), 2304--2356,
 \href{https://arxiv.org/abs/1501.03445}{arXiv:1501.03445}.

\bibitem{Borodin-Corwin-Sasamoto-2014}
Borodin A., Corwin I., Sasamoto T., From duality to determinants for
 {$q$}-{TASEP} and {ASEP}, \href{https://doi.org/10.1214/13-AOP868}{\textit{Ann. Probab.}} \textbf{42} (2014),
 2314--2382, \href{https://arxiv.org/abs/1207.5035}{arXiv:1207.5035}.

\bibitem{Chatterjee-Schutz-2010}
Chatterjee S., Sch\"utz G.M., Determinant representation for some transition
 probabilities in the {TASEP} with second class particles, \href{https://doi.org/10.1007/s10955-010-0022-9}{\textit{J.~Stat.
 Phys.}} \textbf{140} (2010), 900--916, \href{https://arxiv.org/abs/1003.5815}{arXiv:1003.5815}.

\bibitem{Johansson-2000}
Johansson K., Shape f\/luctuations and random matrices, \href{https://doi.org/10.1007/s002200050027}{\textit{Comm. Math.
 Phys.}} \textbf{209} (2000), 437--476, \href{https://arxiv.org/abs/math.CO/9903134}{math.CO/9903134}.

\bibitem{Kassel-Turaev-2008}
Kassel C., Turaev V., Braid groups, \href{https://doi.org/10.1007/978-0-387-68548-9}{\textit{Graduate Texts in Mathematics}},
 Vol.~247, Springer, New York, 2008.

\bibitem{Korhonen-Lee-2014}
Korhonen M., Lee E., The transition probability and the probability for the
 left-most particle's position of the {$q$}-totally asymmetric zero range
 process, \href{https://doi.org/10.1063/1.4851758}{\textit{J.~Math. Phys.}} \textbf{55} (2014), 013301, 15~pages,
 \href{https://arxiv.org/abs/1308.4769}{arXiv:1308.4769}.

\bibitem{Lee-2012}
Lee E., The current distribution of the multiparticle hopping asymmetric
 dif\/fusion model, \href{https://doi.org/10.1007/s10955-012-0582-y}{\textit{J.~Stat. Phys.}} \textbf{149} (2012), 50--72,
 \href{https://arxiv.org/abs/1203.0501}{arXiv:1203.0501}.

\bibitem{Lee-2017-July}
Lee E., Some conditional probabilities in the {TASEP} with second class
 particles, \href{https://doi.org/10.1063/1.4994940}{\textit{J.~Math. Phys.}} \textbf{58} (2017), 123301, 11~pages,
 \href{https://arxiv.org/abs/1707.02539}{arXiv:1707.02539}.

\bibitem{Lee-Wang-2017-arX}
Lee E., Wang D., Distributions of a particle's position and their asymptotics
 in the $q$-deformed totally asymmetric zero range process with site dependent
 jumping rates, \href{https://arxiv.org/abs/1703.08839}{arXiv:1703.08839}.

\bibitem{Macdonald}
Macdonald I.G., Symmetric functions and {H}all polynomials, 2nd ed., \textit{Oxford
 Classic Texts in the Physical Sciences}, The Clarendon Press, Oxford
 University Press, New York, 2015.

\bibitem{Mountford-Guiol-2005}
Mountford T., Guiol H., The motion of a second class particle for the {TASEP}
 starting from a decreasing shock prof\/ile, \href{https://doi.org/10.1214/105051605000000151}{\textit{Ann. Appl. Probab.}}
 \textbf{15} (2005), 1227--1259, \href{https://arxiv.org/abs/math.PR/0505216}{math.PR/0505216}.

\bibitem{Nagao-Sasamoto-2004}
Nagao T., Sasamoto T., Asymmetric simple exclusion process and modif\/ied random
 matrix ensembles, \href{https://doi.org/10.1016/j.nuclphysb.2004.08.016}{\textit{Nuclear Phys.~B}} \textbf{699} (2004), 487--502,
 \href{https://arxiv.org/abs/cond-mat/0405321}{cond-mat/0405321}.

\bibitem{Povolotsky-2013}
Povolotsky A.M., On the integrability of zero-range chipping models with
 factorized steady states, \href{https://doi.org/10.1088/1751-8113/46/46/465205}{\textit{J.~Phys.~A: Math. Theor.}} \textbf{46}
 (2013), 465205, 25~pages, \href{https://arxiv.org/abs/1308.3250}{arXiv:1308.3250}.

\bibitem{Schutz-1997}
Sch\"utz G.M., Exact solution of the master equation for the asymmetric
 exclusion process, \href{https://doi.org/10.1007/BF02508478}{\textit{J.~Stat. Phys.}} \textbf{88} (1997), 427--445,
 \href{https://arxiv.org/abs/cond-mat/9701019}{cond-mat/9701019}.

\bibitem{Tracy-Widom-2008}
Tracy C.A., Widom H., Integral formulas for the asymmetric simple exclusion
 process, \href{https://doi.org/10.1007/s00220-008-0443-3}{\textit{Comm. Math. Phys.}} \textbf{279} (2008), 815--844,
 \href{https://arxiv.org/abs/0704.2633}{arXiv:0704.2633}.

\bibitem{Tracy-Widom-2009}
Tracy C.A., Widom H., On the distribution of a second-class particle in the
 asymmetric simple exclusion process, \href{https://doi.org/10.1088/1751-8113/42/42/425002}{\textit{J.~Phys.~A: Math. Theor.}}
 \textbf{42} (2009), 425002, 6~pages, \href{https://arxiv.org/abs/0907.4395}{arXiv:0907.4395}.

\bibitem{Tracy-Widom-2013}
Tracy C.A., Widom H., On the asymmetric simple exclusion process with multiple
 species, \href{https://doi.org/10.1007/s10955-012-0531-9}{\textit{J.~Stat. Phys.}} \textbf{150} (2013), 457--470,
 \href{https://arxiv.org/abs/1105.4906}{arXiv:1105.4906}.

\bibitem{Wang-Waugh-2016}
Wang D., Waugh D., The transition probability of the {$q$}-{TAZRP}
 ({$q$}-bosons) with inhomogeneous jump rates, \href{https://doi.org/10.3842/SIGMA.2016.037}{\textit{SIGMA}} \textbf{12}
 (2016), 037, 16~pages, \href{https://arxiv.org/abs/1512.01612}{arXiv:1512.01612}.

\end{thebibliography}
\end{document}